\numberwithin{equation}{section}
\newtheorem{theorem}[equation]{Theorem}
\newtheorem{proposition}[equation]{Proposition}
\newtheorem{corollary}[equation]{Corollary}
\newtheorem{lemma}[equation]{Lemma}
\theoremstyle{definition}
\newtheorem{definition}[equation]{Definition}
\newtheorem{notation}[equation]{Notation}
\newcommand{\id}{\mathrm{id}}
\newcommand{\powers}[1]{[\kern -1pt [{#1}] \kern -1pt ]} 
\newcommand{\nov}[1]{(\kern -1.7pt ( {#1})\kern-1.7pt )} 
\newcommand{\tensor}{\mathop{\otimes}}
\newcommand{\bZ}{\mathbb{Z}}
\newcommand{\oo}{\mathcal{O}}
\newcommand{\tprodrt}{\ensuremath{\sideset{}{^\mathrm{rt}}{\mathop{\underset{\widetilde\ }{\prod}}}}}
\newcommand{\tprodlt}{\ensuremath{\sideset{^\mathrm{lt}}{}{\mathop{\underset{\widetilde\ }{\prod}}}}}
\newcommand{\T}{\mathfrak{T}} 
\newcommand{\cone}{\mathrm{cone}}
\newcommand{\tot}{\mathrm{Tot}}
\newcommand{\totrt}{\ensuremath{\mathrm{Tot}^{\mathrm{rt}}\,}}
\newcommand{\hh}[1]{\ensuremath{\mathbb{#1}}}
\renewcommand{\phi}{\varphi}
\begin{document}

\author{Thomas H\"uttemann}
\author{Luke Steers}

\email{t.huettemann@qub.ac.uk}

\urladdr{www.qub.ac.uk/puremaths/Staff/Thomas Huettemann/}

\address{Pure Mathematics Research Centre\\
School of Mathematics and Physics\\
Queen's University Belfast\\
Belfast BT7 1NN\\
Northern Ireland, UK}

\title[Finite domination over strongly $\bZ$-graded rings]%
{Finite domination and Novikov homology over strongly $\bZ$-graded rings}

\date{\today}

\subjclass[2010]{Primary 18G35; Secondary 16W50, 55U15}

\begin{abstract}
  Let $L$ be a unital $\bZ$-graded ring, and let $C$ be a bounded
  chain complex of finitely generated $L$-modules. We give a
  homological characterisation of when $C$ is homotopy equivalent to a
  bounded complex of finitely generated projective
  $L_{0}$\nobreakdash-modules, generalising known results for twisted
  \textsc{Laurent} polynomial rings. The crucial hypothesis is that
  $L$ is a {\it strongly\/} graded ring.
\end{abstract}

\maketitle

\section{Finite domination over strongly $\bZ$-graded rings}

\subsection*{Finite domination and Novikov homology}

Let $L$ be a unital ring, and let $K$ be a subring of~$L$. A bounded
chain complex~$C$ of (right) $L$-modules is {\it $K$-finitely
  dominated\/} if $C$, considered as a complex of
$K$\nobreakdash-modules, is a retract up to homotopy of a bounded
complex of finitely generated free $K$-modules; this happens if and
only if $C$ is homotopy equivalent, as a $K$-module complex, to a
bounded complex of finitely generated projective $K$-modules
\cite[Proposition~3.2.~(ii)]{RATFO}. The following result of
\textsc{Ranicki} gives a complete homological characterisation of
finite domination in an important special case:

\begin{theorem}[\textsc{Ranicki \cite[Theorem~2]{RFDNR}}]
  \label{thm:orinigal}
  Let $R$ be a unital ring, and let $R[t,t^{-1}]$ denote the
  \textsc{Laurent} polynomial ring in the indeterminate~$t$. Let $C$
  be a bounded chain complex of finitely generated free
  $R[t,t^{-1}]$-modules. The complex~$C$ is $R$-finitely dominated if
  and only if both
  \begin{displaymath}
    C \tensor_{R[t,t^{-1}]} R\nov{t^{-1}} \qquad \text{and} \qquad C
    \tensor_{R[t,t^{-1}]} R\nov{t}
  \end{displaymath}
  have vanishing homology in all degrees. Here $R\nov{t} =
  R\powers{t}[t^{-1}]$ is the ring of formal \textsc{Laurent} series
  in~$t$, and similarly $R\nov{t^{-1}} = R\powers{t^{-1}}[t]$ stands
  for the ring of formal \textsc{Laurent} series in~$t^{-1}$.
\end{theorem}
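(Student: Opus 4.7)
The strategy is to combine the algebraic mapping torus characterisation of $R$-finitely dominated complexes over $R[t,t^{-1}]$ with a Mayer--Vietoris decomposition relating the Laurent polynomial ring to its two Novikov completions. The fundamental tool is the short exact sequence of $R[t,t^{-1}]$-bimodules
\begin{equation*}
  0 \longrightarrow R[t,t^{-1}] \longrightarrow R\nov{t} \oplus R\nov{t^{-1}} \longrightarrow R\nov{t,t^{-1}} \longrightarrow 0,
\end{equation*}
where $R\nov{t,t^{-1}}$ denotes the ring of doubly-infinite formal Laurent series. Tensoring with a bounded free complex $C$ and passing to the resulting long exact sequence in homology links the two Novikov homologies of $C$ to its homological behaviour over $R[t,t^{-1}]$.

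For the ``only if'' direction, suppose $C$ is $R$-finitely dominated. By a standard reduction going back to Mather and Ranicki, $C$ is then homotopy equivalent, over $R[t,t^{-1}]$, to an algebraic mapping torus
\begin{equation*}
  T(h) \;=\; \cone\bigl( t\cdot\id - h \colon D \tensor_{R} R[t,t^{-1}] \to D \tensor_{R} R[t,t^{-1}] \bigr)
\end{equation*}
of a self-chain-map $h$ of some bounded complex $D$ of finitely generated projective $R$-modules. After base change to $R\nov{t^{-1}}$, the operator $t\cdot\id - h$ acquires the explicit two-sided inverse $\sum_{n \geq 0} t^{-n-1} h^{n}$, which converges in the $t^{-1}$-adic topology because $D$ is degree-wise finitely generated. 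Hence $T(h) \tensor_{R[t,t^{-1}]} R\nov{t^{-1}}$ has vanishing homology, and the argument for $R\nov{t}$ is entirely symmetric.

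For the ``if'' direction, which is the principal obstacle, assume that both Novikov homologies of $C$ vanish. The long exact sequence produced above then yields null-homotopies $s^{+}$ over $R\nov{t}$ and $s^{-}$ over $R\nov{t^{-1}}$ of the canonical maps from $C$ into its completions. One must promote these into genuine finite-domination data over $R$: the difference $s^{+} - s^{-}$, naturally interpreted over $R\nov{t,t^{-1}}$, has to be shown to arise from a chain equivalence between $C$ and a bounded complex of finitely generated projective $R$-modules. The crucial technical step is a support-truncation argument controlling the $t$-degrees appearing in $s^{\pm}$ by using the boundedness of $C$, so that their Mayer--Vietoris difference admits a well-defined ``finite polynomial part'' over $R[t,t^{-1}]$. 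This finiteness analysis is the true heart of the theorem; it is precisely the step that the strong-gradedness hypothesis of the main theorem of the paper is designed to generalise.
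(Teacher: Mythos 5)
Your ``only if'' half is essentially sound and is in substance the same argument the paper formalises in \S3 for general strongly graded rings: replace $C$ by a mapping torus via the canonical resolution, transfer the self-map to the finite projective $R$-complex $D$ by the \textsc{Mather} trick, and invert the resulting operator over the \textsc{Novikov} ring (the paper does this via the truncated totalisation of the bicomplex $E_{\bullet,\bullet}$). One small correction: with your normalisation $T(h)=\cone(t\cdot\id-h)$, the series $\sum_{n\geq 0}t^{-n-1}h^{n}$ inverts $t\cdot\id-h$ only over $R\nov{t^{-1}}$; the case of $R\nov{t}$ is not literally ``the same $h$'' but requires the torus built from the transfer of the $t^{-1}$-action (equivalently, rewriting $\cone(t\cdot\id-h)\cong\cone(\id-t^{-1}h)$ and interchanging $t$ and $t^{-1}$) --- exactly the regrading step the paper performs at the end of its proof. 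That is easily repaired.

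The ``if'' direction, however, is not a proof but a statement of intent, and this is where the genuine gap lies. From vanishing of $H_{*}(C\tensor_{R[t,t^{-1}]}R\nov{t^{\pm 1}})$ you do get chain contractions $s^{\pm}$ of the two completed complexes, and your \textsc{Mayer}--\textsc{Vietoris} sequence of $R[t,t^{-1}]$-bimodules is exact (note the quotient $\prod_{n\in\bZ}Rt^{n}$ is only a bimodule, not a ring, so ``interpreted over $R\nov{t,t^{-1}}$'' already needs care). But nothing in the proposal actually produces a bounded complex $D$ of finitely generated projective $R$-modules, maps $\alpha\colon C\to D$, $\beta\colon D\to C$, and a homotopy $\beta\alpha\simeq\id_{C}$. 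The sentences ``has to be shown to arise from a chain equivalence'' and ``a support-truncation argument controlling the $t$-degrees'' name the problem rather than solve it: the contractions $s^{\pm}$ involve infinitely many $t$-degrees, a naive finite-degree truncation of them is neither a chain map nor a contraction, and extracting finite-domination data from the pair $(s^{+},s^{-})$ is precisely the content of the theorem. Compare the paper's route, which fills exactly this hole: extend $C$ to a complex of (twisting) sheaves $\mathfrak{D}$ (Proposition~\ref{exchcos}), use that $H\big(\oo(q,p)\big)$ is a finite complex of finitely generated projective $R_{0}$-modules (Proposition~\ref{hogrs}, Corollary~\ref{cor:H_of_O}, Lemma~\ref{lem:H0}), and use the vanishing of the two \textsc{Novikov} homologies to show, via the hypercohomology comparison diagram, that $C$ is a derived-category retract of $H^{0}(\mathfrak{D})$, whence $R_{0}$-finitely dominated. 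Some construction of this kind (or \textsc{Ranicki}'s original transversality argument) must be supplied; as written, the hard implication remains unproven.
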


The cited paper \cite{RFDNR} also contains a discussion of the
relevance of finite domination in topology. --- The rings~$R\nov{t}$
and~$R\nov{t^{-1}}$ are known as \textsc{Novikov} rings. The theorem
can be formulated more succinctly: {\it The chain complex~$C$ is
  $R$\nobreakdash-finitely dominated if and only if it has trivial
  \textsc{Novikov} homology}.

\medbreak

In the present paper we formulate and prove a surprising
strengthening: {\it Theorem~\ref{thm:orinigal} remains valid if
  $R[t,t^{-1}]$ is replaced by an arbitrary strongly $\bZ$-graded
  ring}, provided the definition of \textsc{Novikov} rings is adapted
suitably. We start by recalling the requisite definitions.

\begin{definition}
  A \emph{\(\bZ\)-graded ring} is a (unital) ring \(L\) equipped with
  a direct sum decomposition into additive subgroups
  \(L=\bigoplus_{k\in\bZ} L_k\) such that \(L_kL_\ell\subseteq
  L_{k+\ell}\) for all \( k,\ell\in\bZ\), where \(L_kL_\ell\) consists
  of the finite sums of ring products \(xy\) with \(x \in L_k\) and
  \(y \in L_\ell\). The summands~$L_{k}$ are called the {\it
    (homogeneous) components\/} of~$L$; elements of~$L_{k}$ are called
  {\it homogeneous of degree~$k$}. --- Following \textsc{Dade}
  \cite{GRD} we call $L$ a {\it strongly $\bZ$-graded ring} if $L_{k}
  L_{\ell} = L_{k+\ell}$ for all $k, \ell \in \bZ$.
\end{definition}

A specific example of a strongly $\bZ$-graded ring is $L =
R[t,t^{-1}]$, the ring of \textsc{Laurent} polynomials; the $n$th
component is $\{rt^{n}\,|\, r \in R\}$. The reader may wish to keep
this motivating example in mind.

\medbreak

We will use the symbol $R[t,t^{-1}] = \bigoplus_{k \in \bZ} R_{k}$ for
an arbitrary $\bZ$\nobreakdash-graded ring in this paper. One may
think of the elements of~$R[t,t^{-1}]$ as formal \textsc{Laurent}
polynomials $\sum_{j=m}^{n} a_{j} t^{j}$ with $a_{j} \in R_{j}$, but
note that this is a purely notational device; in general the ring
$R[t,t^{-1}]$ does not contain an element called~$t$.  The point of
using this notation is that we have a rather suggestive way of
denoting various rings and modules constructed from~$R[t,t^{-1}]$. For
example, we can introduce the \textsc{Novikov} rings
\begin{displaymath}
  R\nov{t^{-1}} = \prod_{n \leq 0} R_{n} \,\oplus\, \bigoplus_{n > 0}
  R_{n}
  \quad \text{and} \quad
  R\nov{t} = \bigoplus_{n < 0} R_{n} \,\oplus\, \prod_{n \geq 0} R_{n}
\end{displaymath}
and think of their elements as formal power series $\sum_{j \in \bZ}
a_{j} t^{j}$ with $a_{j} \in R_{j}$ such that $a_{j} = 0$ whenever $j
\gg 0$ or $j \ll 0$, respectively.

\medbreak

It is known that in any $\bZ$-graded ring the unit element is
necessarily homogeneous of degree~$0$ so that $R_{0}$ is a subring
of~$R[t,t^{-1}]$. With these preliminaries in place we can formulate
our main result:

\begin{theorem}
  \label{thm:main}
  Let $R[t,t^{-1}] = \bigoplus_{k \in \bZ} R_{k}$ be a strongly
  $\bZ$-graded ring, and let $C$ be a bounded chain complex of
  finitely generated free $R[t,t^{-1}]$-modules. The complex~$C$ is
  $R_{0}$-finitely dominated if and only if both
  \begin{displaymath}
    \label{eq:cond}
    C \tensor_{R[t,t^{-1}]} R\nov{t^{-1}} \qquad \text{and} \qquad C
    \tensor_{R[t,t^{-1}]} R\nov{t}
  \end{displaymath}
  have vanishing homology in all degrees.
\end{theorem}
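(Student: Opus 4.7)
The plan is to prove both directions by adapting the strategy of Ranicki's proof of Theorem~\ref{thm:orinigal} to the strongly graded setting. The key structural input is Dade's theorem: in a strongly $\bZ$-graded ring~$L = R[t,t^{-1}]$, each homogeneous component~$L_n = R_n$ is an invertible $(R_0, R_0)$-bimodule, with multiplication giving isomorphisms $R_n \tensor_{R_0} R_m \cong R_{n+m}$. In particular, every $R_n$ is a finitely generated projective $R_0$-module on either side. These isomorphisms will play the role of the trivial identities $t^n \cdot t^m = t^{n+m}$ available in the ordinary Laurent polynomial case.

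For the ``only if'' direction, suppose $C$ is $R_0$-finitely dominated. I would exploit an $(L,L)$-bimodule short exact sequence of the form
\begin{displaymath}
  0 \longrightarrow R[t,t^{-1}] \longrightarrow R\nov{t^{-1}} \oplus R\nov{t} \longrightarrow \prod_{n\in\bZ} R_n \longrightarrow 0
\end{displaymath}
and tensor it with~$C$ over~$R[t,t^{-1}]$. A direct calculation, using that $R_0$-finite domination forces the tensor product of $C$ with the rightmost (``doubly infinite'') term to be acyclic, then yields the vanishing of the two Novikov homologies via the induced long exact sequence in homology.

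The substantive direction is the converse. Given vanishing of both Novikov homologies, I would reconstruct a bounded finitely generated projective $R_0$-model of~$C$ by an algebraic mapping telescope argument in the spirit of Ranicki's approach. A choice of homogeneous $L$-bases on each~$C_i$ yields an exhaustive $\bZ$-indexed filtration of~$C$ whose successive quotients are finitely generated projective $R_0$-modules (projectivity here depends critically on Dade invertibility of the~$R_n$). The vanishing of $H_*(C \tensor_{R[t,t^{-1}]} R\nov{t})$ should allow the positive-degree tail of this filtration to be contracted up to $R_0$-homotopy, and symmetrically the vanishing of $H_*(C \tensor_{R[t,t^{-1}]} R\nov{t^{-1}})$ should collapse the negative-degree tail. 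Splicing these two contractions produces the desired bounded $R_0$-projective replacement of~$C$.

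The main obstacle is that in the strongly graded setting each~$R_n$ is only projective (rather than free of rank one) over~$R_0$, and $R_0$ is generally not central in $R[t,t^{-1}]$. Consequently the filtration quotients inherit a subtler bimodule structure than in Ranicki's situation, and transitions between left and right $R_0$-actions must be mediated by the Dade isomorphisms $R_n \tensor_{R_0} R_{-n} \cong R_0$. Verifying that the telescope truncations remain coherent through this transfer, and that the resulting projective model is honestly \emph{bounded} with finitely generated terms, is where the hypothesis of strong grading is indispensable and where the proof must deviate most carefully from Ranicki's original argument.
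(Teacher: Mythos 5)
Your ``only if'' direction contains a genuine error. The pivotal claim --- that $R_0$-finite domination forces $C \tensor_{R[t,t^{-1}]} \prod_{n\in\bZ}R_n$ to be acyclic --- is false already for the ordinary Laurent polynomial ring: take $C$ to be the two-term complex $R[t,t^{-1}] \xrightarrow{\,1-t\,} R[t,t^{-1}]$, which is $R$-finitely dominated (it is $R$-homotopy equivalent to $R$ concentrated in degree~$0$), while after tensoring with $\prod_{n\in\bZ}Rt^{n}$ the map $1-t$ annihilates the constant doubly infinite series $\sum_{n\in\bZ}t^{n}$, so $H_{1}\neq 0$. Worse, even if that third term were acyclic, the long exact sequence of your short exact sequence would only yield $H_*(C)\cong H_*\big(C\tensor_{R[t,t^{-1}]}R\nov{t^{-1}}\big)\oplus H_*\big(C\tensor_{R[t,t^{-1}]}R\nov{t}\big)$, which is not zero unless $C$ itself is acyclic; so the deduction cannot produce the desired vanishing. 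The mechanism that actually works (in Ranicki's case and in this paper) is different: one replaces $C$, up to quasi-isomorphism over $R[t,t^{-1}]$, by the cone of a self-map $\nu$ of $D\tensor_{R_0}R[t,t^{-1}]$ with $D$ the dominating bounded f.g.\ projective $R_0$-complex (the algebraic torus plus the Mather trick, Corollary~\ref{findomimp}; note that even defining $\mu$ requires a chosen partition of unity of type $(-1,1)$, that is, the strong grading), and then identifies $\cone(\nu)\tensor_{R[t,t^{-1}]}R\nov{t}$ with the right-truncated totalisation of a double complex whose columns are cones on $\alpha\beta\tensor\id\simeq\id$ and hence acyclic (Proposition~\ref{proposition:cone_Novikov_is_totrt}); truncated totalisations of column-acyclic double complexes are acyclic, and the $R\nov{t^{-1}}$ statement follows by reversing the grading.

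Your ``if'' direction is only an outline with the decisive step missing: the assertion that vanishing of $H_*\big(C\tensor_{R[t,t^{-1}]}R\nov{t}\big)$ ``should allow'' the positive tail of the degreewise filtration to be contracted up to $R_0$-homotopy is essentially a restatement of the theorem, and you give no mechanism converting Novikov acyclicity into such a contraction, nor any reason why the spliced result is a \emph{bounded} complex of \emph{finitely generated} projective $R_0$-modules. The paper proves this implication by a genuinely different route: $C$ is extended to a bounded complex of sheaves $\mathfrak{D}$ built from twisting sheaves $\oo(q_n,p_n)$ with $q_n+p_n\geq 0$ (Proposition~\ref{exchcos}); strong gradedness makes these honest sheaves and makes $H^{0}(\mathfrak{D})$ a bounded complex of finitely generated projective $R_0$-modules (Lemma~\ref{lem:H0}), and the Novikov vanishing hypothesis is used, via the hypercohomology comparisons in Fig.~\ref{fig:big}, to exhibit $C$ as a retract of $H^{0}(\mathfrak{D})$ in the derived category of $R_0$, whence finite domination. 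Your Dade-theoretic input (each $R_n$ an invertible $R_0$-bimodule, finitely generated projective on either side) is correct and is indeed where the strong grading enters (Proposition~\ref{fgpro}), but by itself it closes neither gap.
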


As a special case this says that Theorem~\ref{thm:orinigal} holds for
twisted \textsc{Laurent} polynomial rings \cite[Theorem~6]{MR2282258},
or even for the more general case of crossed products (which are
characterised by having homogeneous invertible elements of arbitrary
degrees). However, this is not the complete extent of the
generalisation as there are strongly graded rings which are not
crossed products. Possibly the easiest example to write down is the
following: Let $K[A,B,C,D]$ be a polynomial ring over the field~$K$,
considered as a $\bZ$-graded ring by giving $A$ and~$C$ degree~$1$,
and giving $B$ and~$D$ degree~$-1$. Let $R[t,t^{-1}]$ be the quotient
$K[A,B,C,D]/ (AB+CD-1)$; as the relation is homogeneous, this results
in a $\bZ$-graded ring which is actually strongly graded since
$AB+CD=BA+DC=1$ by construction. It can be shown, using ideas from
\textsc{Gr\"obner} basis theory, that the only units are $K^{\times}
\subset R[t,t^{-1}]$ so that our ring is not a crossed product. Now
consider the following 2-step chain complex:
\begin{displaymath}
  R[t,t^{-1}] \rTo[l>=3em]^{\left(1-A \atop 1-B \right)} R[t,t^{-1}]
  \oplus R[t,t^{-1}] \rTo[l>=6em]^{\left( 1-B, -(1-A) \right)}
  R[t,t^{-1}] 
\end{displaymath}
The map $1-A$ becomes invertible in~$R\nov{t}$, with inverse
$(1-A)^{-1} = \sum_{j \geq 0} A^{j}$; similarly, the map $1-B$ becomes
invertible in~$R\nov{t^{-1}}$. Hence the complex becomes acyclic after
tensoring with $R\nov{t^{\pm 1}}$, and Theorem~\ref{thm:main} asserts
that it is in fact $R_{0}$-finitely dominated.

\subsection*{Structure of the paper}

For the proof of Theorem~\ref{thm:main} we combine techniques from
strongly graded algebra with homotopy-theoretic methods and
homological algebra of bicomplexes.  We start by introducing various
rings associated to a $\bZ$-graded ring, and discuss partitions of
unity which are the main technical tool from graded algebra to be used
throughout the paper. This will occupy the remainder of~\S1.

In \S2 we prove the ``if'' implication of Theorem~\ref{thm:main},
based on the homotopy-theoretic methods used in \cite{RFDNR} for the
case of a \textsc{Laurent} polynomial ring. The organisation follows
the pattern laid out by the first author in~\cite{TVB}, where a
description of the algebro-geometric background of the procedure is
given. It is of interest to note that the $\bZ$-graded structure of
our ring allows us in Proposition~\ref{exchcos} to construct complexes
of sheaves from the given complex of modules~$C$, while the {\it
  strong\/} grading ensures that certain chain complexes consist of
finitely generated projective $R_{0}$-modules,
cf.~Corollary~\ref{cor:H_of_O}.

In \S3 we attack the reverse implication of Theorem~\ref{thm:main},
using double complex techniques as documented in~\cite{DCVNC}. The
graded structure is used at various places. Most notably, the
definition and the properties of the ``algebraic torus'', a substitute
for the more usual algebraic mapping torus of a self-map of a chain
complex, depend crucially on extra data which can be chosen only in
view of the strong grading. In addition, passage to \textsc{Novikov}
rings involves a certain ``twisting'' operation on powers of modules
that is defined in terms of the grading.

The results of this paper were obtained as part of the second author's
PhD thesis.

\subsection*{Rings associated with $\bZ$-graded rings}

We make the following conventions for the rest of the paper: All
rings, graded or otherwise, are assumed unital and all modules right
unless stated otherwise. We let $R[t,t^{-1}]$ stand for an arbitrary
unital $\bZ$-graded ring, with $n$th homogeneous component denoted
by~$R_{n}$. That is, we have a graded ring $R[t,t^{-1}] = \bigoplus_{n
  \in \bZ} R_{n}$. In many cases we will assume this ring to be
strongly graded, but will take care to indicate where this hypothesis
is really needed.

\medbreak

Given a $\bZ$-graded ring~$R[t,t^{-1}]$, it is known that the unit
element~$1$ must be homogeneous of degree~$0$
\cite[Proposition~1.4]{GRD}. It is then immediate from the definition
that $R_{0}$~is a subring of~$R[t,t^{-1}]$, and that all the
homogeneous components~$R_{k}$ are $R_{0}$-bimodules.

\medbreak

Given the \(\bZ\)-graded ring \(R[t,t^{-1}]\) we define two
$\bZ$-graded subrings by setting $R[t^{-1}]=\bigoplus_{k \leq 0} R_k$
and $R[t] = \bigoplus_{k \geq 0} R_{k}$. These graded rings have
trivial components in all positive and negative degrees,
respectively. Elements can be thought of as formal polynomials in
$t^{-1}$ and~$t$, respectively, with the coefficient of~$t^{j}$ an
element of~$R_{j}$.

We can also define the analogues of power series rings,
$R\powers{t^{-1}} = \prod_{n \leq 0} R_{n}$ and $R\powers{t} =
\prod_{n \geq 0} R_{n}$. Elements are of course formal power series
in $t^{-1}$ and~$t$, respectively, with coefficient of~$t^{j}$ an
element of~$R_{j}$. We have previously defined the \textsc{Novikov}
rings $R\nov{t^{-1}}$ and $R\nov{t}$. Note that power series and
\textsc{Novikov} rings are {\it not\/} considered as graded rings; in
fact, they do not admit a natural $\bZ$-grading.

The collection of rings fits into the commutative diagram of
ring inclusions displayed in Fig.~\ref{fig:rings}.

\begin{figure}[ht]
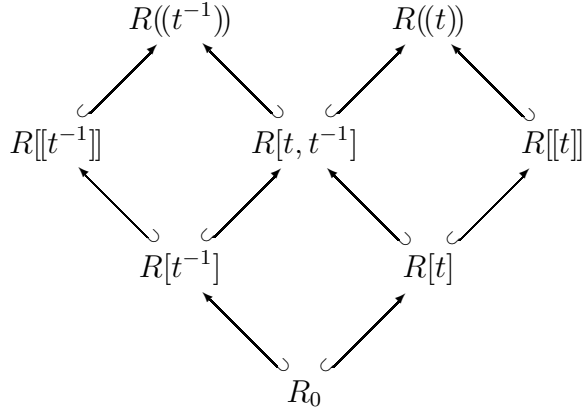

  \centering
  \begin{diagram}[small]
    && R\nov{t^{-1}} & &&& R\nov{t} && \\
    & \ruInto  && \luInto && \ruInto && \luInto \\
    R\powers{t^{-1}} &&&& R[t,t^{-1}]&&&& R\powers{t} \\
    &\luInto && \ruInto && \luInto && \ruInto\\
    && R[t^{-1}] &&&& R[t] &&  \\
    &&& \luInto && \ruInto &&&\\
    &&&& R_0 &&&&
  \end{diagram}
  \caption{The collection of rings and their inclusion relation}
  \label{fig:rings}
\end{figure}

\subsection*{Partitions of unity and strongly graded rings}

\begin{definition}
  Given \(n\in\bZ\), an expression of the form \(1=\sum_{j} u_j v_j\)
  with \(u_j\in R_n,\,v_j\in R_{-n}\) is called a \emph{partition of
    unity of type \((n,-n)\)}. This is understood to be a finite sum;
  we do not specify the summation range unless we need it explicitly.
\end{definition}

A partition of unity of type~$(n,-n)$ exists if and only if $1 \in
R_{n} R_{-n}$. Partitions of unity are our main technical tool; their
existence is intimately related to the graded structure of the ring:

\begin{proposition}[Characterisation of strongly graded rings]
  \label{prop:characterisation_strongly_graded}
  The following statements are equivalent:
  \begin{enumerate}
  \item \label{item:1} The ring \(R[t,t^{-1}]\) is strongly graded.
  \item \label{item:2} For every \(n\in\bZ\) there is at least one
    partition of unity of type \((n,-n)\).
  \item \label{item:3} There is at least one partition of unity of
    type~\((1,-1)\), and at least one of type~\((-1,1)\).
  \end{enumerate}
\end{proposition}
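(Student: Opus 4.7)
The plan is to verify the chain of implications \((\ref{item:1}) \Rightarrow (\ref{item:2}) \Rightarrow (\ref{item:3}) \Rightarrow (\ref{item:1})\). The implication \((\ref{item:2}) \Rightarrow (\ref{item:3})\) is immediate by specialising to \(n = \pm 1\).

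For \((\ref{item:1}) \Rightarrow (\ref{item:2})\), I observe that a partition of unity of type \((n,-n)\) exists precisely when \(1 \in R_n R_{-n}\). Strong grading gives \(R_n R_{-n} = R_0\), which contains \(1\) (as the unit of a \(\bZ\)-graded ring is homogeneous of degree~\(0\)), so the existence of such a finite expression is automatic.

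The main work lies in \((\ref{item:3}) \Rightarrow (\ref{item:1})\), which splits naturally into two steps. First, bootstrap the two given partitions of unity to partitions of unity of every type \((n,-n)\). Concretely, if \(1 = \sum_i a_i b_i\) with \(a_i \in R_1\) and \(b_i \in R_{-1}\), then
\begin{displaymath}
  1 \;=\; 1 \cdot 1 \;=\; \sum_i a_i \Bigl( \sum_j a_j b_j \Bigr) b_i
  \;=\; \sum_{i,j} (a_i a_j)(b_j b_i),
\end{displaymath}
with \(a_i a_j \in R_2\) and \(b_j b_i \in R_{-2}\). Iterating this insertion trick produces partitions of unity of type \((n,-n)\) for every \(n \geq 1\); the analogous construction starting from the given partition of unity of type \((-1,1)\) handles negative~\(n\); and for \(n = 0\) the trivial expression \(1 = 1 \cdot 1\) suffices.

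Second, I use these partitions of unity to upgrade the graded inclusion \(R_k R_\ell \subseteq R_{k+\ell}\) to equality. Given \(k, \ell \in \bZ\) and an arbitrary element \(x \in R_{k+\ell}\), choose a partition of unity \(1 = \sum_j u_j v_j\) of type \((k,-k)\). Then
\begin{displaymath}
  x \;=\; \Bigl( \sum_j u_j v_j \Bigr) x \;=\; \sum_j u_j (v_j x),
\end{displaymath}
and homogeneity gives \(v_j x \in R_{-k} R_{k+\ell} \subseteq R_\ell\), so \(x \in R_k R_\ell\). This yields \(R_{k+\ell} \subseteq R_k R_\ell\), completing the proof.

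The main obstacle is the bootstrapping step: one must notice that a partition of unity can be multiplied into itself by inserting the expression \(1 = \sum_j a_j b_j\) between \(a_i\) and \(b_i\), which preserves the value of the sum while doubling the degree of the left-hand factors. Everything else is either definitional or a routine homogeneity computation.
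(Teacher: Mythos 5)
Your proof is correct. The heart of your argument --- bootstrapping a partition of unity of type \((\pm 1,\mp 1)\) to one of type \((\pm n, \mp n)\) by repeatedly inserting \(1=\sum_j a_j b_j\) and regrouping, i.e.\ using products \(a_{i_1}\cdots a_{i_m}\) and \(b_{i_m}\cdots b_{i_1}\) --- is exactly the paper's proof of \eqref{item:3}\(\Rightarrow\)\eqref{item:2}. Where you differ is that the paper simply cites \textsc{Dade} (Proposition~1.6 of \cite{GRD}) for the equivalence \eqref{item:1}\(\Leftrightarrow\)\eqref{item:2}, whereas you prove both directions yourself: \eqref{item:1}\(\Rightarrow\)\eqref{item:2} from \(R_nR_{-n}=R_0\ni 1\), and \eqref{item:2}\(\Rightarrow\)\eqref{item:1} by writing \(x=\sum_j u_j(v_jx)\) with \(v_jx\in R_{-k}R_{k+\ell}\subseteq R_\ell\) to get \(R_{k+\ell}\subseteq R_kR_\ell\). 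This makes your argument self-contained at essentially no extra cost (the degree-shifting computation is the standard one underlying Dade's result), at the price of a slightly longer write-up; both steps are carried out correctly, including the observation that homogeneity of \(1\) in degree \(0\) is what makes \(1\in R_nR_{-n}\) meaningful.
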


\begin{proof}
  For the equivalence of statements~\eqref{item:1} and \eqref{item:2}
  see Proposition~1.6 of \cite{GRD}. That \eqref{item:2}
  implies~\eqref{item:3} is trivial.  For the converse, suppose that
  $1 = \sum_{j=1}^{q} u_{j} v_{j}$ is a partition of unity of type
  $(\pm 1, \mp 1)$; then for $m \geq 2$ the $q^{m}$ pairs of elements
  \begin{displaymath}
    u_{\mathbf{j}} = u_{j_{1}} u_{j_{2}} \cdots u_{j_{m}} \quad
    \text{and} \quad v_{\mathbf{j}} = v_{j_{m}} v_{j_{m-1}} \cdots
    v_{j_{1}} \ ,
  \end{displaymath}
  where $\mathbf{j} = (j_{1}, j_{2}, \cdots, j_{m}) \in \{1, 2,
  \cdots, q\}^{m}$, form a partition of unity of type $(\pm m, \mp
  m)$.
\end{proof}

The following Proposition is well known; we include a proof because of
its fundamental importance for this paper.

\begin{proposition}\label{fgpro}
  If \(R[t,t^{-1}]\) is strongly graded, then each of the \(R_k\) is
  finitely generated projective both as a left \(R_0\)-module and as a
  right \(R_0\)-module.
\end{proposition}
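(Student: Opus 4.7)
The natural approach is to exhibit $R_k$ as a direct summand of a finitely generated free $R_0$-module, which gives both finite generation and projectivity at once. The strong grading hypothesis, via Proposition~\ref{prop:characterisation_strongly_graded}, supplies partitions of unity of both types $(k,-k)$ and $(-k,k)$ for every $k \in \bZ$; I would use one for the right-module statement and the other for the left-module statement. This symmetric availability is precisely what distinguishes the strongly graded case.

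For the right $R_0$-module structure, fix a partition of unity $1 = \sum_{j=1}^{q} u_j v_j$ of type $(k,-k)$, so $u_j \in R_k$ and $v_j \in R_{-k}$. I would define the $R_0$-linear maps
\begin{displaymath}
  \phi \colon R_k \to R_0^{q}, \quad x \mapsto (v_1 x, v_2 x, \ldots, v_q x)
  \qquad \text{and} \qquad
  \psi \colon R_0^{q} \to R_k, \quad (a_1, \ldots, a_q) \mapsto \sum_{j} u_j a_j \ ,
\end{displaymath}
noting that $v_j x \in R_{-k} R_k \subseteq R_0$ and $u_j a_j \in R_k R_0 \subseteq R_k$, and that right multiplication by $R_0$ commutes with the left multiplications appearing in the formulas. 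The composite $\psi \circ \phi$ sends $x$ to $\sum_j u_j v_j x = x$ by the defining property of the partition of unity, so $\phi$ is a split monomorphism and $R_k$ is a direct summand of $R_0^{q}$ as a right $R_0$-module.

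For the left $R_0$-module structure, I would repeat the construction with the roles reversed, using a partition of unity $1 = \sum_{j=1}^{q'} a_j b_j$ of type $(-k,k)$. The analogous maps $x \mapsto (xa_1, \ldots, xa_{q'})$ and $(c_1, \ldots, c_{q'}) \mapsto \sum_j c_j b_j$ are left $R_0$-linear, and the same telescoping argument shows that $R_k$ is a direct summand of ${}_{R_0}R_0^{q'}$.

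There is no substantive obstacle here: once one recognises that strong grading is precisely the condition that delivers partitions of unity of both types, the construction of the splittings is a one-line verification using $\sum u_j v_j = 1$ (respectively $\sum a_j b_j = 1$). The only minor bookkeeping point is to check the side on which scalars act and to confirm that the components of $\phi$ and $\psi$ actually land in the asserted degrees, both of which follow directly from $R_p R_q \subseteq R_{p+q}$.
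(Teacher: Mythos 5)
Your proof is correct and is essentially the paper's argument: the paper fixes the same partition of unity of type $(k,-k)$ and invokes the dual basis lemma with the maps $g_j(y)=v_jy$ and elements $u_j$, which is exactly your explicit splitting $\psi\circ\phi=\id$ exhibiting $R_k$ as a direct summand of $R_0^q$. Your treatment of the left-module case (via a type $(-k,k)$ partition of unity) is the symmetric argument the paper omits with ``we treat the case of right $R_0$-modules only.''
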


\begin{proof}
  We treat the case of right \(R_0\)-modules only. Let \(1=\sum_{j}
  u_jv_j\) be a partition of unity of type~$(k,-k)$; existence is
  guaranteed by
  Proposition~\ref{prop:characterisation_strongly_graded}. The maps
  \(g_j \colon R_{k} \rightarrow R_{0}\) with \(g_j (y) = v_j y\) are maps
  of right \(R_0\)-modules and satisfy \(\sum_{j} u_jg_j(r) = \sum_{j}
  u_{j} v_{j} r = r\) for any $r \in R_{k}$. Thus \(R_k\) is a
  finitely generated projective right \(R_0\)-module by the dual basis
  lemma, cf.~Proposition~12 of \cite[\S{}II.2.7]{MR1727844}.
\end{proof}

\begin{corollary}
  \label{cor:stays_projective}
  Suppose $R[t,t^{-1}]$ is strongly graded. Then any projective left
  or right $R[t,t^{-1}]$-module is also projective when considered as
  a left or right $R_{0}$\nobreakdash-module.\qed
\end{corollary}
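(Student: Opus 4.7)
The plan is to reduce the statement to the observation that direct sums and direct summands of projective modules are projective, combined with Proposition~\ref{fgpro}. I treat the case of right modules; the left case is symmetric.

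First, I would note that Proposition~\ref{fgpro} gives that each homogeneous component \(R_{k}\) is finitely generated projective as a right \(R_{0}\)-module. Since \(R[t,t^{-1}] = \bigoplus_{k \in \bZ} R_{k}\) as a right \(R_{0}\)-module (the right \(R_{0}\)-action coming from restriction of the right \(R[t,t^{-1}]\)-action), and since an arbitrary direct sum of projective modules is projective, it follows that \(R[t,t^{-1}]\) itself is a projective right \(R_{0}\)-module.

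Next, any free right \(R[t,t^{-1}]\)-module~\(F\) is a direct sum of copies of \(R[t,t^{-1}]\); by the previous paragraph (and again using that direct sums of projectives are projective), \(F\) is projective as a right \(R_{0}\)-module. Finally, if \(P\) is a projective right \(R[t,t^{-1}]\)-module, then \(P\) is a direct summand of some free right \(R[t,t^{-1}]\)-module~\(F\). Forgetting down to \(R_{0}\) preserves this direct sum decomposition, so \(P\) is a direct summand (as a right \(R_{0}\)-module) of the right \(R_{0}\)-projective module~\(F\), hence is itself projective over~\(R_{0}\).

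There is no substantial obstacle here: the only input one needs beyond formal generalities about projective modules is the fact, secured by Proposition~\ref{fgpro}, that the components~\(R_{k}\) are \(R_{0}\)-projective, and this is where the strong grading hypothesis enters.
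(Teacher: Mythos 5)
Your argument is correct, and it is exactly the reasoning the paper intends: the corollary is stated with no written proof precisely because it follows immediately from Proposition~\ref{fgpro} via the decomposition $R[t,t^{-1}]=\bigoplus_{k\in\bZ}R_{k}$ and the standard facts that direct sums and direct summands of projective modules are projective. No gaps.
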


Given numbers \(q,p\in\bZ\) we define the symbols
\begin{displaymath}
  t^{q}\cdot R[t^{-1}] = \bigoplus_{j\leq q} R_j \quad \text{and}
  \quad t^{-p}\cdot R[t]  = \bigoplus_{j\geq -p} R_j \ ;
\end{displaymath}
the former is an $R[t^{-1}]$-bimodule, the latter an
$R[t]$-bimodule. The induced $R[t,t^{-1}]$-modules behave as expected
in the strongly graded case:

\begin{lemma}
  \label{lem:sfcdn}
  Let \(q,p\in\bZ\). The $R[t,t^{-1}]$-linear maps
  \begin{gather*}
    \gamma \colon t^{q}\cdot R[t^{-1}] \tensor_{R[t^{-1}]} R[t,t^{-1}]
    \rTo R[t,t^{-1}] \ , \quad r\tensor s\mapsto rs \\%
    \intertext{and}%
    \alpha \colon t^{-p}\cdot R[t] \tensor_{R[t]} R[t,t^{-1}] \rTo
    R[t,t^{-1}] \ , \quad r \tensor s\mapsto rs
  \end{gather*}
  are isomorphisms provided the ring \(R[t,t^{-1}]\) is strongly
  graded.
\end{lemma}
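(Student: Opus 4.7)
The plan is to construct explicit two-sided inverses for both $\gamma$ and $\alpha$ using partitions of unity, whose existence is guaranteed by Proposition~\ref{prop:characterisation_strongly_graded}. The two cases are symmetric, so I will treat $\gamma$ in detail and indicate the necessary modifications for $\alpha$ at the end.

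First, I pick a partition of unity $1 = \sum_{k} a_{k} b_{k}$ of type~$(q,-q)$, so that $a_{k} \in R_{q}$ and $b_{k} \in R_{-q}$. Since $q\leq q$ we have $a_{k}\in R_{q}\subseteq t^{q}\cdot R[t^{-1}]$, which makes the following definition meaningful:
\begin{displaymath}
\delta \colon R[t,t^{-1}] \longrightarrow t^{q}\cdot R[t^{-1}] \tensor_{R[t^{-1}]} R[t,t^{-1}] \ , \quad x \mapsto \sum_{k} a_{k} \tensor b_{k} x \ .
\end{displaymath}
The composition $\gamma\circ\delta$ sends $x$ to $\sum_{k}a_{k}b_{k}x = 1\cdot x = x$, so $\gamma\delta = \id$ is immediate.

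The substantive step will be to verify $\delta\circ\gamma = \id$; this is where the $\bZ$-graded degree bookkeeping enters. For a homogeneous element $r \in R_{j}$ with $j \leq q$ and any $s \in R[t,t^{-1}]$, the product $b_{k}r$ lies in $R_{-q+j}$, and because $-q+j \leq 0$ it belongs to the subring~$R[t^{-1}]$. Thus $b_{k}r$ may be slid across the tensor sign, giving
\begin{displaymath}
\delta(\gamma(r\tensor s)) \ = \ \sum_{k} a_{k} \tensor b_{k}rs \ = \ \sum_{k} a_{k}b_{k}r \tensor s \ = \ r\tensor s \ .
\end{displaymath}
Since $t^{q}\cdot R[t^{-1}]$ is the direct sum of its homogeneous components~$R_{j}$ with $j\leq q$, bilinearity extends this to arbitrary simple tensors, showing that $\delta$ is a two-sided inverse of~$\gamma$.

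For $\alpha$ the strategy is identical: I pick a partition of unity of type~$(-p,p)$, say $1 = \sum_{k} c_{k} d_{k}$ with $c_{k} \in R_{-p}\subseteq t^{-p}\cdot R[t]$ and $d_{k} \in R_{p}$, set $\beta(x) = \sum_{k} c_{k} \tensor d_{k}x$, and carry out the two compositions in the same manner; the analogous degree check is that for $r \in R_{j}$ with $j \geq -p$, the product $d_{k}r$ lies in $R_{p+j}\subseteq R[t]$ and may therefore be moved across the tensor. The only real obstacle, beyond guessing the correct form of the candidate inverse, is recognising that the degree of $b_{k}r$ (respectively $d_{k}r$) falls in the non-positive (respectively non-negative) range, which is precisely what makes the tensor relation applicable.
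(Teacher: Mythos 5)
Your proposal is correct and matches the paper's own argument: the paper likewise defines the candidate inverse $r \mapsto \sum_j u_j \tensor v_j r$ from a partition of unity of the appropriate type and verifies both composites, with the key step being exactly your observation that $v_j r$ lands in the subring $R[t]$ (respectively $R[t^{-1}]$) and so can be slid across the tensor sign. The only cosmetic difference is that you reduce to homogeneous $r$ before sliding, whereas the paper notes directly that $v_j r \in R[t]$ for all $r \in t^{-p}\cdot R[t]$; the substance is identical.
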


\begin{proof}
  Suppose \(R[t,t^{-1}]\) is strongly graded. Then we may choose a
  partition of unity of type \((-p,p)\), say \(1= \sum_{j} u_jv_j\)
  with \(u_j\in R_{-p}\) and \(v_j\in R_{p}\).  The
  $R[t,t^{-1}]$-linear map
  \begin{displaymath}
    \beta \colon R[t,t^{-1}] \rTo t^{-p}\cdot R[t]
    \tensor_{R[t]} R[t,t^{-1}] \ ,\quad r \mapsto \sum_{j} u_{j}
    \tensor v_{j} r
  \end{displaymath}
  satisfies \(\alpha\beta(r)= \sum_{j} u_{j} v_{j} r = r\) so
  that $\alpha\beta = \id$. Also
  \begin{displaymath}
    \beta\alpha (r\tensor s) = \beta(rs) = \sum_{j} u_{j}
    \tensor v_{j} rs \underset{(*)}= \sum_{j} u_{j} v_{j} r
    \tensor s = r \tensor s
  \end{displaymath}
  (where the equality labelled $(*)$ is true since $v_{j} r \in R[t]$
  for $r \in t^{-p} \cdot R[t]$), whence \(\beta \alpha = \id\). ---
  The case of \(\gamma\) is similar.
\end{proof}

\section{Trivial Novikov homology implies finite domination}

\subsection*{Sheaves and their cohomology}

We will have occasion to study diagrams of the form
\begin{equation}
  \label{eq:diagram}
  \mathfrak{M} = \ \Big( M^{-} \rTo^{\mu^{-}} M \lTo^{\mu^{+}} M^{+} \Big)
  \ ;
\end{equation}
the entries will be modules, or chain complexes of modules. The
maps~$\mu^{-}$ and~$\mu^{-}$ are called the {\it structure maps
  of~$\mathfrak{M}$}. A {\it map of diagrams\/} consists of a triple
of maps~$(f^{-}, f, f^{+})$ which is compatible with the structure
maps of source and target.

\begin{definition}
  \label{shfcond}
  Let as before $R[t,t^{-1}]$ be a $\bZ$-graded ring. A {\it
    pre-sheaf\/} is a diagram~$\mathfrak{M}$ of the
  form~\eqref{eq:diagram} where $M^{-}$ is an $R[t^{-1}]$-module, $M$
  is an $R[t,t^{-1}]$-module, $M^{+}$ is an $R[t]$-module, $f^{-}$ is
  $R[t^{-1}]$-linear and $f^{+}$ is $R[t]$-linear. The pre-sheaf
  $\mathfrak{M}$ is called a {\it sheaf\/} if the adjoints of the
  structure maps~$f^{-}$ and~$f^{+}$ are isomorphisms of
  $R[t,t^{-1}]$-modules:
  \begin{displaymath}
    M^{-} \tensor_{R[t^{-1}]} R[t,t^{-1}] \rTo^{\cong} M \lTo^{\cong}
    M^{+} \tensor_{R[t]} R[t,t^{-1}]
  \end{displaymath}
\end{definition}

Of particular importance will be the pre-sheaves
\begin{equation}
  \label{eq:Oqp}
  \oo (q ,p) = \ \Big(t^q\cdot R[t^{-1}]
  \rTo^{\subset}_{\iota_{q}} R[t,t^{-1}] \lTo^{\supset}_{{}_{p}\iota}
  t^{-p}\cdot R[t]\Big)
\end{equation}
which depend on the numbers $q,p \in \bZ$. In case $R[t,t^{-1}]$ is
strongly graded these pre-sheaves are actually sheaves by
Lemma~\ref{lem:sfcdn}, and are then called {\it twisting sheaves}.

\medbreak

Back to a general diagram~$\mathfrak{M}$ of modules of the
form~\eqref{eq:diagram}, we define:

\begin{definition}
  The $R_{0}$-module chain complex
  \begin{displaymath}
    H(\mathfrak{M}) = \ \Big( M \lTo^{-f^{-} + f^{+}} M^- \oplus M^+
    \Big)
  \end{displaymath}
  (concentrated in chain degrees \(-1\) and \(0\)) is called the
  \emph{cohomology chain complex} of~$\mathfrak{M}$. We write
  $H^{q}(\mathfrak{M})$ for the $(-q)$th homology
  of~$H(\mathfrak{M})$.
\end{definition}

In fact, $H^{q}(\mathfrak{M}) = \lim^{q} (\mathfrak{M})$. --- The
definitions of pre-sheaf and sheaf apply to chain complexes instead of
modules {\it mutatis mutandis}; in effect, a (pre-)sheaf of chain
complexes is the same as a chain complex of (pre-)sheaves. Given any
diagram of chain complexes \(\mathfrak{N}= \big(N^- \rTo^{g^{-}} N
\lTo^{g^{+}} N^+ \big)\) we obtain a double complex
\(H(\mathfrak{N})\)
by applying the cohomology chain complex construction levelwise. (The
double complex is concentrated in columns~$-1$ and~$0$, and has
commuting differentials.)

\begin{definition}
  Given a diagram of chain complexes \(\mathfrak{N}\) we define its
  {\it hypercohomology complex\/} $\hh{H}(\mathfrak{N})$ by
  setting $\hh{H}(\mathfrak{N}) = \mathrm{Tot} H(\mathfrak{N})$,
  the totalisation of~$H(\mathfrak{N})$.
\end{definition}

\noindent The totalisation is the usual one:
$\hh{H}(\mathfrak{N})_{n} = N^{-}_{n} \oplus N^{+}_{n} \oplus
N_{n+1}$, with differential induced by $-g^{-}$, $g^{+}$, the
differentials of~$N^{-}$ and~$N^{+}$, and the negative of the
differential of~$N$. Up to shift and sign conventions
$\hh{H}(\mathfrak{N})$ is the mapping cone of the map
\(-g^-+g^+\).

\begin{proposition}\label{hogrs}
  Let \(R[t,t^{-1}]\) be a $\bZ$-graded ring, and let $q,p \in \bZ$.
  \begin{enumerate}
  \item For \(p+q\geq 0\), the complex \(H\big(\oo(q,p)\big)\) is
    homotopy equivalent to the chain complex having
    \(\bigoplus_{k=-p}^{q} R_k\) in chain level \(0\) as its only
    non-trivial chain module.
  \item For \(p+q=-1\), the complex \(H\big(\oo(q,p)\big)\) is
    contractible.
  \item For \(p+q\leq-2\), the complex \(H\big(\oo(q,p)\big)\) is
    homotopy equivalent to the chain complex having
    \(\bigoplus_{k=q+1}^{-p-1} R_k\) in chain level \(-1\) as its only
    non-trivial chain module.
  \end{enumerate}
\end{proposition}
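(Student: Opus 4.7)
The plan is to unpack the definition and analyse the single non-trivial differential
$$\phi\colon t^{q}\cdot R[t^{-1}]\oplus t^{-p}\cdot R[t]\longrightarrow R[t,t^{-1}],\qquad (a,b)\longmapsto -a+b,$$
which constitutes the entirety of $H\bigl(\oo(q,p)\bigr)$, sitting in chain degrees $0$ and $-1$. The key observation I would use throughout is that as an $R_{0}$-bimodule $R[t,t^{-1}]=\bigoplus_{j\in\bZ}R_{j}$, with $t^{q}\cdot R[t^{-1}]$ corresponding to the summands with $j\leq q$ and $t^{-p}\cdot R[t]$ to those with $j\geq -p$; no strong grading hypothesis is required for this. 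From these descriptions I can read off
$$\ker\phi\;\cong\;t^{q}\cdot R[t^{-1}]\cap t^{-p}\cdot R[t]\;=\;\bigoplus_{-p\leq j\leq q}R_{j}$$
(the empty direct sum when $q<-p$), while the image of $\phi$ equals the submodule $\bigoplus_{j\leq q}R_{j}+\bigoplus_{j\geq -p}R_{j}$ of $R[t,t^{-1}]$.

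Now I treat the three ranges of $p+q$ separately. If $p+q\geq 0$, then $-p\leq q$ so the image of $\phi$ is all of $R[t,t^{-1}]$, producing a short exact sequence
$$0\longrightarrow\bigoplus_{k=-p}^{q}R_{k}\longrightarrow t^{q}\cdot R[t^{-1}]\oplus t^{-p}\cdot R[t]\stackrel{\phi}{\longrightarrow}R[t,t^{-1}]\longrightarrow 0$$
which splits because $R[t,t^{-1}]$ is free. A chosen splitting exhibits $H\bigl(\oo(q,p)\bigr)$ as the direct sum of the complex $\bigoplus_{k=-p}^{q}R_{k}$ concentrated in degree~$0$ and an acyclic complex $R[t,t^{-1}]\xleftarrow{\id}R[t,t^{-1}]$. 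When $p+q=-1$ the summands $\bigoplus_{j\leq q}R_{j}$ and $\bigoplus_{j\geq q+1}R_{j}$ are complementary in $R[t,t^{-1}]$, so $\phi$ is a bijection and the complex is contractible. Finally, when $p+q\leq-2$ the kernel of $\phi$ vanishes and its image admits the direct summand $C=\bigoplus_{k=q+1}^{-p-1}R_{k}$ as a complement in $R[t,t^{-1}]$; decomposing $R[t,t^{-1}]=\mathrm{Im}(\phi)\oplus C$ splits the two-term complex as the sum of a contractible piece $\mathrm{Im}(\phi)\xleftarrow{\cong}t^{q}\cdot R[t^{-1}]\oplus t^{-p}\cdot R[t]$ and the module $C$ placed in chain degree~$-1$.

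The argument is essentially bookkeeping with the grading, and I do not anticipate a genuine obstacle. The only step requiring mild care is the observation that the short exact sequences appearing in the first and third cases split at the level of $R_{0}$-modules (which is all that is needed for the chain-homotopy statement); but since the kernel or the complement of the image is in each case visibly a direct summand of $R[t,t^{-1}]$ picked out by the grading, an explicit section is immediate and no projectivity argument beyond that is required.
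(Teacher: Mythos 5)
Your proposal is correct and follows essentially the same route as the paper: identify the kernel and image of the single differential degreewise via the grading, and split the resulting sequence by explicit $R_{0}$-linear maps defined componentwise (the paper's maps $\sigma$ and $\rho$ are exactly the grading-based section and retraction you describe, and the paper only writes out the case $p+q\geq 0$, treating the other two as analogous). The one phrase to strike is the claim that the sequence ``splits because $R[t,t^{-1}]$ is free'' --- freeness over $R[t,t^{-1}]$ is irrelevant and $R[t,t^{-1}]$ need not be projective over $R_{0}$ without the strong grading --- but your closing remark already replaces this with the correct justification, namely the explicit section picked out by the grading.
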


\begin{proof}
  We consider the case \(p+q\geq 0\) only, the others being similar
  (and quite irrelevant for our purposes). It is enough to show that
  the $R_{0}$-module sequence
  \begin{displaymath}
    0 \rTo \bigoplus_{k=-p}^{q} R_k %
    \pile {\rTo^{\Delta} \\ \lDashto_{\rho}} %
    t^{q}\cdot R[t^{-1}] \oplus t^{-p}\cdot R[t] %
    \pile{\rTo[l>=4em]^{- \iota_q+ {}_p\iota} \\ \lDashto_{\sigma}} %
    R[t,t^{-1}] \rTo 0
  \end{displaymath}
  is split exact, where $\iota_{q}$ and~${}_{p} \iota$ denote the
  inclusions, and where $\Delta$ is the ``diagonal'' map $r \mapsto
  (r,r)$; the splitting maps~$\rho$ and~$\sigma$ will be defined
  presently. --- The sequence can be re-written in more explicit
  terms:
  \begin{displaymath}
    0 \rTo \bigoplus_{k=-p}^{q} R_k \rTo^{\Delta} \bigoplus_{k \leq q}
    R_{k} \,\oplus\, \bigoplus_{k \geq -p} R_{k}
    \rTo[l>=4em]^{-\iota_q+ {}_p\iota} \bigoplus_{k \in \bZ} R_{k} \rTo 0
  \end{displaymath}
  The composition $(- \iota_q+ {_p\iota}) \circ \Delta$ is trivial. 
  We define~$\sigma$ by the formula
  \begin{displaymath}
    \sigma \colon \bigoplus_{k \in \bZ} R_{k} \rTo \bigoplus_{k \leq
      q} R_{k} \,\oplus\, \bigoplus_{k \geq -p} R_{k} \ , \quad
    \sum_{k \in \bZ} r_k\mapsto \bigg(-\sum_{k\leq q} r_{k}, \, \sum_{k
      \geq q+1} r_{k} \bigg)
  \end{displaymath}
  (note that $p+q \geq 0$ implies $q+1 > -p$) and $\rho$ by
  \begin{displaymath}
    \rho \colon \bigoplus_{k \leq q}
    R_{k} \,\oplus\, \bigoplus_{k \geq -p} R_{k}  \rTo
    \bigoplus_{k=-p}^{q} R_k \ , \quad \bigg(\sum_{k \leq q} r_k,\,
    \sum_{\ell \geq -p} s_\ell \bigg) \mapsto \sum_{\ell = -p}^{q}
    s_\ell \ .
  \end{displaymath}
  They satisfy the identities
  \begin{align*}
    \rho \circ \Delta & = \id \ , \\%
    \sigma \circ (- \iota_q+ {_p\iota}) + \Delta \circ \rho & = \id \
    , \\%
    (- \iota_q+ {_p\iota}) \circ \sigma & = \id \ ,
  \end{align*}
  as can be verified by direct calculation; thus the sequence is split
  exact as required.
 \end{proof}

\begin{corollary}
  \label{cor:H_of_O}
  If \(R[t,t^{-1}]\) is strongly graded then the cohomology chain
  complex \(H\big(\oo(q,p)\big)\) is \(R_0\)-finitely
  dominated.
\end{corollary}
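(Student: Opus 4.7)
The plan is to combine Proposition~\ref{hogrs} with Proposition~\ref{fgpro}. By Proposition~\ref{hogrs}, up to homotopy equivalence of $R_0$-module chain complexes, $H\bigl(\oo(q,p)\bigr)$ is (in all three cases for the sign of $p+q$) concentrated in a single chain degree with chain module a finite direct sum $\bigoplus_{k \in I} R_k$ over some finite index set $I \subset \bZ$, or it is contractible.

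Next, I would invoke Proposition~\ref{fgpro}: since $R[t,t^{-1}]$ is strongly graded, each component $R_k$ is a finitely generated projective right $R_0$-module. A finite direct sum of finitely generated projective modules is again finitely generated projective, so the single non-trivial chain module $\bigoplus_{k \in I} R_k$ is a finitely generated projective $R_0$-module. In the contractible case, the zero complex is trivially a bounded complex of finitely generated projective $R_0$-modules.

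Thus $H\bigl(\oo(q,p)\bigr)$ is homotopy equivalent to a bounded chain complex of finitely generated projective $R_0$-modules, which is exactly the characterisation of $R_0$-finite domination recalled at the start of~\S1 (via \cite[Proposition~3.2.(ii)]{RATFO}).

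There is no real obstacle here: the corollary is just the conjunction of the two preceding results, and strong grading is used precisely to upgrade the components $R_k$ from bimodules to finitely generated projective $R_0$-modules via Proposition~\ref{fgpro}.
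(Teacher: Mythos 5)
Your proposal is correct and follows essentially the same route as the paper: invoke Proposition~\ref{hogrs} to replace $H\big(\oo(q,p)\big)$, up to homotopy equivalence, by a complex with a single chain module that is a finite sum of components $R_k$, then use Proposition~\ref{fgpro} to see that this module is finitely generated projective over $R_0$, which gives $R_0$-finite domination via the characterisation from \cite[Proposition~3.2.(ii)]{RATFO}. Your explicit handling of the contractible case $p+q=-1$ is a harmless extra detail the paper leaves implicit.
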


\begin{proof}
  By Proposition~\ref{hogrs}, \(H\big(\oo(q,p)\big)\) is
  homotopy equivalent to a chain complex with one non-zero entry
  consisting of a finite sum of homogeneous components~\(R_k\)
  of~\(R[t,t^{-1}]\). Since the \(R_k\) are all finitely generated
  projective right \(R_0\)-modules by Proposition~\ref{fgpro},
  \(H\big(\oo(q,p)\big)\) is \(R_0\)-finitely dominated.
\end{proof}

\subsection*{Building chain complexes of pre-sheaves from chain
  complexes of modules}

Thanks to the graded structure of our ring~$R[t,t^{-1}]$ one can
extend a given chain complex of $R[t,t^{-1}]$-modules to a complex of
pre-sheaves. We start with the case of a single module homomorphism.

\begin{lemma}
  \label{lem:exmorts}
  Let $q,p \in \bZ$, and let \(f \colon R[t,t^{-1}]^n \rTo
  R[t,t^{-1}]^m\) be an \(R[t,t^{-1}]\)-linear map.
  For all sufficiently large numbers \(p^{\prime},
  q^{\prime} \in \bZ\) there exists a map of pre-sheaves
  \begin{displaymath}
    (f^-,f,f^+)\colon \bigoplus_{k=1}^{n} \oo(q,p) \rTo
    \bigoplus_{k=1}^{m}
    \oo(q^{\prime},p^{\prime}) \ ,
  \end{displaymath}
  depending on~$q^{\prime}$ and~$p^{\prime}$, which has
  the given~$f$ as its middle component. In other words, the module
  homomorphism~$f$ can be extended to a map of pre-sheaves.
\end{lemma}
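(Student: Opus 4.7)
The plan is to exploit the matrix representation of $f$ combined with the fact that the submodules $t^{q}\cdot R[t^{-1}]$ and $t^{-p}\cdot R[t]$ of $R[t,t^{-1}]$ are closed under multiplication by homogeneous elements of appropriate bounded degree. Since the structure maps of $\oo(q,p)$ are inclusions, constructing a map of pre-sheaves reduces to showing that $f$ restricts compatibly to the relevant submodules on the left and on the right.

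First I would represent $f$ by an $m \times n$ matrix $A = (a_{ij})$ with entries $a_{ij} \in R[t,t^{-1}]$. Each entry decomposes as a finite sum $a_{ij} = \sum_k a_{ij,k}$ with $a_{ij,k} \in R_k$ and only finitely many non-zero terms. Let $N$ be a bound such that $a_{ij,k} = 0$ for $|k| > N$, uniformly in $i,j$. For any $x \in R_\ell$ and any homogeneous summand $a_{ij,k}$, the product $a_{ij,k}\cdot x$ lies in $R_{k+\ell}$, and multiplication by an element of $R[t^{-1}]$ (respectively $R[t]$) only decreases (respectively increases) the degree relative to some fixed shift. Consequently, if $x \in t^{q}\cdot R[t^{-1}] = \bigoplus_{j \leq q} R_j$, then the image of $x$ under left multiplication by any $a_{ij}$ lies in $\bigoplus_{j \leq q+N} R_j = t^{q+N}\cdot R[t^{-1}]$. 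Analogously, for $x \in t^{-p}\cdot R[t]$ the image lies in $t^{-(p+N)}\cdot R[t]$.

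Next I would choose any $q' \geq q+N$ and any $p' \geq p+N$. Define $f^{-}$ to be the restriction of $f$ to $\bigoplus_{k=1}^n t^{q}\cdot R[t^{-1}]$, noting that the image lands in $\bigoplus_{k=1}^m t^{q'}\cdot R[t^{-1}]$ by the bound established above, and similarly define $f^{+}$ as the restriction of $f$ to $\bigoplus_{k=1}^n t^{-p}\cdot R[t]$ with image in $\bigoplus_{k=1}^m t^{-p'}\cdot R[t]$. Linearity of $f^{-}$ over $R[t^{-1}]$ and of $f^{+}$ over $R[t]$ is inherited from $R[t,t^{-1}]$-linearity of $f$, since $R[t^{-1}]$ and $R[t]$ are subrings of $R[t,t^{-1}]$.

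Finally, compatibility of $(f^{-}, f, f^{+})$ with the structure maps $\iota_{q}, \iota_{q'}$ and ${}_{p}\iota, {}_{p'}\iota$ is automatic: the structure maps are inclusions, and $f^{\pm}$ are by construction the restrictions of $f$ along these inclusions. Hence $(f^{-}, f, f^{+})$ is a map of pre-sheaves of the required form. There is no real obstacle here; the only point that requires any care is the uniform choice of the degree bound~$N$, and this is immediate once one notes that the matrix $A$ has only finitely many entries, each of which is a finite sum of homogeneous elements. Note that the argument does not use the strong grading hypothesis, so the existence of the map of pre-sheaves is valid for arbitrary $\bZ$-graded rings; the strong grading only enters later (via Lemma~\ref{lem:sfcdn}) to ensure that the pre-sheaves $\oo(q,p)$ are actually sheaves.
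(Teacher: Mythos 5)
Your proof is correct and follows essentially the same route as the paper: decompose $f$ into its matrix entries, bound the degrees of their homogeneous components (you use a single uniform bound $N$ where the paper bounds each entry separately and then takes a maximum), and define $f^{\pm}$ as restrictions of $f$, with compatibility automatic because the structure maps of $\oo(q,p)$ are inclusions. Your closing remark that the strong grading is not needed here also matches the paper, where it enters only via Lemma~\ref{lem:sfcdn} to upgrade pre-sheaves to sheaves.
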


\begin{proof}
  Consider the following diagram, where $q^{\prime}$
  and~$p^{\prime}$ are, for the moment, unspecified integers:
  \begin{diagram}
    \bigoplus_{k=1}^{n} t^{q}\cdot R[t^{-1}] &
    \rTo[l>=3em]^{\iota_{q}} & \bigoplus_{k=1}^{n} R[t,t^{-1}] &
    \lTo[l>=3em]^{_{p}\iota} & \bigoplus_{k=1}^{n} t^{-p}\cdot R[t]\\%
    && \dTo^{f} && \\%
    \bigoplus_{k=1}^{m} t^{q^{\prime}} \cdot R[t^{-1}] &
    \rTo[l>=3em]^{\iota_{q^{\prime}}} & \bigoplus_{k=1}^{m}
    R[t,t^{-1}] & \lTo[l>=3em]^{{}_{p^{\prime}}\iota} &
    \bigoplus_{k=1}^{m} t^{-p^{\prime}} \cdot R[t]
  \end{diagram}
  The map~$f$ yields $R[t,t^{-1}]$-linear maps ${}_k{f_j}\colon
  R[t,t^{-1}]\rTo R[t,t^{-1}]$ by restriction to the $k$th summand of
  the source and the $j$th summand of the target, and the (finite)
  collection of these maps determines~$f$. --- For now fix indices~$j$
  and~$k$. The element ${}_{k} f_{j}(1) \in R[t,t^{-1}]$ is a finite
  sum of non-zero homogeneous elements. Let $-a$ be the minimal
  occurring degree if ${}_{k} f_{j}(1) \neq 0$, and an arbitrary
  integer otherwise. As ${}_{k} f_{j}(r) = {}_{k} f_{j}(1) \cdot r$,
  the image of $t^{-p}\cdot R[t]$ under~${}_{k} f_{j}$ is contained in
  $t^{-(p+a)}\cdot R[t] \subseteq R[t,t^{-1}]$, hence is contained in
  $t^{-p^{\prime}}\cdot R[t]$ provided $p^{\prime}$ is
  sufficiently large in the sense that $p^{\prime} \geq
  a+p$. --- Allowing arbitrary indices $j$ and~$k$ now, we may choose
  $p^{\prime}$ sufficiently large for all~$j$ and~$k$. Then the
  map $f \circ {}_{p} \iota$ factors as
  \begin{displaymath}
    \bigoplus_{k=1}^{n} t^{-p}\cdot R[t] \rTo^{f^{+}}
    \bigoplus_{k=1}^{m} t^{-p^{\prime}} \cdot R[t]
    \rTo^{{_{p^{\prime}}} \iota} \bigoplus_{k=1}^{m}
    R[t,t^{-1}]
  \end{displaymath}
  where $f^{+}$ is actually the map~$f$, suitably restricted in source
  and target. --- The component $f^{-}$ is dealt with in a similar
  manner.
\end{proof}

\begin{proposition}[Extending chain complexes of modules to chain
  complexes of pre-sheaves]
  \label{exchcos}
  Let \(C\) be a bounded above chain complex of finitely generated
  free \(R[t,t^{-1}]\)-modules together with specified isomorphisms
  \(C_n\cong{R}[t,t^{-1}]^{k_n}\). Then $C$ is the ``middle''
  component of a chain complex of pre-sheaves. More precisely, there
  exists a chain complex of pre-sheaves \(\mathfrak{D} = \big( D^{-}
  \rTo D \lTo D^{+} \big)\) such that
  \begin{displaymath}
    \mathfrak{D}_n = \bigoplus_{k_{n}} \oo(q_n, p_n)
  \end{displaymath}
  for certain \(q_n,p_n\in\bZ\) with $q_{n} + p_{n} \geq 0$, with $D
  \cong C$ via the specified isomorphisms.

  In case $R[t,t^{-1}]$ is strongly graded, $\mathfrak{D}$ is a
  chain complex of sheaves in the sense of Definition~\ref{shfcond}.
\end{proposition}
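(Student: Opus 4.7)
The plan is to build $\mathfrak{D}$ levelwise by descending induction on the chain degree, invoking Lemma~\ref{lem:exmorts} at each stage to extend the next differential of~$C$ to a map of pre-sheaves. Since $C$ is bounded above, there is an integer~$N$ with $C_{n}=0$ for $n>N$. First I set $\mathfrak{D}_{n}=0$ for $n>N$ and $\mathfrak{D}_{N}=\bigoplus_{k_{N}}\oo(0,0)$; here $q_{N}+p_{N}=0\geq 0$, and the middle component agrees with~$C_{N}$ via the specified isomorphism.

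For the inductive step, assume $\mathfrak{D}_{n}=\bigoplus_{k_{n}}\oo(q_{n},p_{n})$ has been constructed. Applying Lemma~\ref{lem:exmorts} to $d_{n}\colon C_{n}\to C_{n-1}$, read as an $R[t,t^{-1}]$-linear map $R[t,t^{-1}]^{k_{n}}\to R[t,t^{-1}]^{k_{n-1}}$ via the specified isomorphisms, produces integers $q_{n-1}$ and~$p_{n-1}$, which may be chosen as large as desired, together with an extension of~$d_{n}$ to a map of pre-sheaves $\mathfrak{D}_{n}\to\bigoplus_{k_{n-1}}\oo(q_{n-1},p_{n-1})$. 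Enlarging these integers further if necessary I may insist that $q_{n-1}+p_{n-1}\geq 0$, and then set $\mathfrak{D}_{n-1}$ equal to this direct sum.

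The one nontrivial verification is that the extensions assemble into a chain complex of pre-sheaves, i.e.\ that $d^{-}_{n-1}\circ d^{-}_{n}=0$ and $d^{+}_{n-1}\circ d^{+}_{n}=0$ for all~$n$ (the middle case is just $d^{2}=0$ in~$C$). I expect this to be essentially free: the construction in Lemma~\ref{lem:exmorts} defines $d^{+}_{n}$ as the restriction of $d_{n}$ to the submodule $\bigoplus_{k_{n}} t^{-p_{n}}\cdot R[t]$, so $d^{+}_{n-1}\circ d^{+}_{n}$ is merely the restriction of $d_{n-1}\circ d_{n}=0$ and vanishes automatically; the $-$ case is symmetric. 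All the genuine work has therefore been absorbed into Lemma~\ref{lem:exmorts}, and I do not anticipate any serious obstacle. Finally, when $R[t,t^{-1}]$ is strongly graded, Lemma~\ref{lem:sfcdn} identifies each $\oo(q,p)$ as an honest sheaf, so $\mathfrak{D}$ is immediately a chain complex of sheaves in the sense of Definition~\ref{shfcond}.
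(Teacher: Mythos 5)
Your proposal is correct and follows essentially the same route as the paper: descending induction starting with $\oo(0,0)$ at the top degree, invoking Lemma~\ref{lem:exmorts} to extend each differential with $q_{n-1}+p_{n-1}\geq 0$, observing that $d^{\pm}_{n-1}\circ d^{\pm}_{n}=0$ because these maps are restrictions of the original differentials (equivalently, because the structure maps of $\oo(q,p)$ are injective), and citing Lemma~\ref{lem:sfcdn} for the sheaf claim in the strongly graded case.
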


\begin{proof}
  We identify the chain modules~$C_{n}$ with direct sums
  $R[t,t^{-1}]^{k_{n}}$ via the given isomorphisms. The boundary maps
  then take the form of homomorphisms $d_{n} \colon
  R[t,t^{-1}]^{k_{n}} \rTo R[t,t^{-1}]^{k_{n-1}}$.

  Let \(m\) be the maximal index of a non-zero entry of
  \(C\). Choose \(q_m = p_m = 0\). 

  Now for $\ell = m, m-1, m-2, \cdots$ we use
  Lemma~\ref{lem:exmorts} to extend the boundary map~$d_{\ell}$ to a
  map of pre-sheaves
  \begin{displaymath}
    \mathfrak{D}_{\ell} = \bigoplus_{k_{\ell}} \oo(q_{\ell},p_{\ell}) %
    \rTo[l>=5em]^{(d_{\ell}^{-}, d_{\ell}, d_{\ell}^{+})} \bigoplus_{k_{\ell-1}} %
    \oo(q_{\ell-1},p_{\ell-1}) = \mathfrak{D}_{\ell-1}
  \end{displaymath}
  with $q_{\ell-1} + p_{\ell-1} \geq 0$. 

  We have defined a (possibly infinite) sequence of maps of
  pre-sheaves $(d_{\ell}^{-}, d_{\ell}, d_{\ell}^{+})$. These maps are
  actually boundary maps of a chain complex of pre-sheaves. Indeed,
  $d_{\ell-1} \circ d_{\ell} = 0$ easily implies $d_{\ell-1}^{+} \circ
  d_{\ell}^{+} = 0$ and $d_{\ell-1}^{-} \circ d_{\ell}^{-} = 0$ as the
  structure maps of the diagrams $\oo(q_{\ell},p_{\ell})$ are
  injective.

  The last sentence of the Proposition holds as the
  pre-sheaves~$\oo(q,p)$ are actually sheaves, by
  Lemma~\ref{lem:sfcdn}, if $R[t,t^{-1}]$ is strongly graded.
\end{proof}

\subsection*{From trivial Novikov homology to finite domination}

With the machinery of sheaves set up we can implement the programme
of~\cite{TVB} to prove that trivial \textsc{Novikov} homology implies
finite domination. The strong grading proves to be crucial in two
places. It is the very fact that twisting sheaves are sheaves (and not
just pre-sheaves), combined with finiteness of their cohomology, that
makes the proof work.

\begin{notation}
  \label{notation:D}
  Given a chain complex \(\mathfrak{D}=(D^- \rTo D \lTo D^+)\) of
  pre-sheaves let \(\mathfrak{D^{+}}\) denote the diagram of chain
  complexes
  \begin{displaymath}
    \mathfrak{D}^{+} = \Big( D^+ \tensor_{R[t]} R[t,t^{-1}]  \rTo D^+ 
    \tensor_{R[t]} R\nov{t} \lTo D^+ \tensor_{R[t]} R\powers{t} \Big)
    \ ;
  \end{displaymath}
  similarly, let \(\mathfrak{D}^{-}\) denote the diagram
  \begin{displaymath}
    \mathfrak{D}^{-} = \Big(D^-\! \tensor_{R[t^{-1}]} R[t,t^{-1}]  \rTo
    D^-\! \tensor_{R[t^{-1}]} R\nov{t^{-1}} \lTo D^-\! \tensor_{R[t^{-1}]}
    R\powers{t^{-1}} \Big)\ . 
  \end{displaymath}
  In addition, we introduce the variants
  \begin{gather*}
        \mathfrak{D}^{\prime +} = \Big( D^+ \tensor_{R[t]} R[t,t^{-1}]
        \rTo 0 \lTo D^+ \tensor_{R[t]} R\powers{t} \Big) \\%
        \intertext{and}%
        \mathfrak{D}^{\prime -} = \Big( D^- \tensor_{R[t^{-1}]}
        R[t,t^{-1}] \rTo 0 \lTo D^- \tensor_{R[t^{-1}]}
        R\powers{t^{-1}} \Big) \ ,
  \end{gather*}
  and write $\zeta^{\pm} \colon \mathfrak{D}^{\pm} \rTo
  \mathfrak{D}^{\prime \pm}$ for the obvious maps of diagrams:
  \begin{diagram}[LaTeXeqno]
    \label{eq:zeta}
    D^{\pm} \tensor_{R[t^{\pm 1}]} R[t,t^{-1}] & \rTo[l>=2.5em] &
    D^{\pm} \tensor_{R[t^{\pm 1}]} R\nov{t^{\pm 1}} & \lTo[l>=2.5em] &
    D^{\pm} \tensor_{R[t^{\pm 1}]} R\powers{t^\pm 1} \\%
    \dTo<{\id} && \dTo<0 && \dTo<{\id} \\%
    D^{\pm} \tensor_{R[t^{\pm 1}]} R[t,t^{-1}] & \rTo & 0 &
    \lTo & D^{\pm} \tensor_{R[t^{\pm 1}]} R\powers{t^\pm 1}
  \end{diagram}
\end{notation}

\medbreak

We wish to analyse the hypercohomology complexes
of~$\mathfrak{D}^{\pm}$. To begin with, the sequence
\begin{equation}
\label{eq:R}
  0 \rTo R[t] \rTo^{%
    \Delta
  }%
  R[t,t^{-1}]\oplus R\powers{t} \rTo[l>=4em]^{%
    \rho
  }%
  R\nov{t}\rTo 0 \ ,
\end{equation}
where $\Delta(r) = (r,r)$ and $\rho(r,s) = s-r$, is split exact as a
sequence of right \(R_0\)-modules, with splitting maps
\begin{displaymath}
  R[t] \lTo^{\kappa} R[t,t^{-1}] \oplus R\powers{t} \lTo^{\lambda}
  R\nov{t} 
\end{displaymath}
specified by the formul\ae{}
\begin{align*}\kappa\colon\Big(\sum_{k\in\bZ}
  r_k,\sum_{k\geq 0} s_k\Big)\mapsto\sum_{k\geq 0} r_k \ ,\\
  \lambda\colon \sum_{k\geq n} r_k\mapsto \bigg(-\sum_{k<0} r_k,
  \sum_{k\geq 0} r_k\bigg) \ .
\end{align*}
Therefore the sequence~\eqref{eq:R} is exact (but not split) as a
sequence of \(R[t]\)-bimodules. If the complex \(D^+\)
consists of free $R[t]$-modules, tensoring~\eqref{eq:R} results in an
exact sequence of right $R[t]$-module chain complexes
\begin{multline*}
  0\rTo D^+ \tensor_{R[t]} R[t] \rTo D^+ \tensor_{R[t]} R[t,t^{-1}] \,
  \oplus \, D^+ \tensor_{R[t]} R\powers{t} \\ \rTo D^+ \tensor_{R[t]}
  R\nov{t} \rTo 0 \ .
\end{multline*}
This means that \(H^0(\mathfrak{D}^+) = D^+ \tensor_{R[t]} R[t] \cong
D^+\) and \(H^1(\mathfrak{D}^+) = 0\) (levelwise application
of~$H^{0}$ and~$H^{1}$). The latter implies that the natural map
\(\Delta^+\colon H^0(\mathfrak{D}^+)\rTo \hh{H}(\mathfrak{D}^+)\)
is a quasi-isomorphism \cite[Lemma~4.2]{TVB}. --- It can be shown by
analogous arguments that the natural map \(\Delta^- \colon
H^0(\mathfrak{D}^-) \rTo \hh{H}(\mathfrak{D}^-)\) is a
quasi-isomorphism, with source $D^{-} \tensor_{R[t^{-1}]} R[t^{-1}]
\cong D^{-}$, provided $D^{-}$ consists of free modules. We have
shown:

\begin{lemma}
  \label{lem:Delta}
  If $D^{+}$ consists of free $R[t]$-modules the map
  \begin{displaymath}
    \Delta^+ \colon H^0 (\mathfrak{D}^+) \rTo
    \hh{H}(\mathfrak{D}^+)
  \end{displaymath}
  is a quasi-isomorphism. Similarly, if $D^{-}$ consists of free
  $R[t^{-1}]$-modules the map $\Delta^- \colon
  H^0(\mathfrak{D}^-) \rTo \hh{H}(\mathfrak{D}^-)$ is a
  quasi-isomorphism.\qed
\end{lemma}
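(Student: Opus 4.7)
The plan is to formalize the discussion in the paragraphs immediately preceding the statement; indeed, the author's phrase \emph{``We have shown:''} already signals that most of the work has been done. For the plus case the starting point is the short exact sequence~\eqref{eq:R} of $R[t]$-bimodules, whose exactness is witnessed by the splitting over~$R_{0}$ given by the maps~$\kappa$ and~$\lambda$ (exactness is detected on underlying abelian groups, so splitting over~$R_{0}$ suffices). Because $D^{+}$ is levelwise a free right $R[t]$-module, the functor $D^{+} \tensor_{R[t]} (-)$ is exact and produces a short exact sequence of chain complexes. Reading off its rightmost three terms identifies the kernel of the map $-f^{-}+f^{+}$ in the cohomology complex of~$\mathfrak{D}^{+}$ with $D^{+} \tensor_{R[t]} R[t] \cong D^{+}$, and identifies its cokernel as~$0$; applied levelwise, this yields $H^{0}(\mathfrak{D}^{+}) \cong D^{+}$ and $H^{1}(\mathfrak{D}^{+}) = 0$. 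The quoted Lemma~4.2 of~\cite{TVB} then upgrades the vanishing of~$H^{1}$ to the statement that the natural inclusion $\Delta^{+}$ of~$H^{0}$ into the totalisation $\hh{H}(\mathfrak{D}^{+})$ is a quasi-isomorphism.

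For the minus case I would run the mirror-image argument. The relevant short exact sequence is
\begin{displaymath}
  0 \rTo R[t^{-1}] \rTo R[t,t^{-1}] \oplus R\powers{t^{-1}}
  \rTo R\nov{t^{-1}} \rTo 0
\end{displaymath}
of $R[t^{-1}]$-bimodules, with the first map the diagonal and the second given by $(r,s) \mapsto s-r$. It is split exact as a sequence of right $R_{0}$-modules by formul\ae{} entirely analogous to those for $\kappa$ and~$\lambda$, obtained by reversing the direction of indexing (so positive indices are replaced by non-positive indices, and so on). Tensoring with the levelwise free complex $D^{-}$ then yields $H^{0}(\mathfrak{D}^{-}) \cong D^{-}$ and $H^{1}(\mathfrak{D}^{-}) = 0$, from which the second assertion follows exactly as in the plus case.

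No real obstacle presents itself: the content of the lemma is the bookkeeping identification of the kernel and cokernel of the relevant bimodule sequence, together with one invocation of a standard two-column totalisation fact. The only hypothesis that is used substantively is the levelwise freeness of~$D^{\pm}$, needed so that tensoring against the $R[t^{\pm 1}]$-bimodule short exact sequences preserves exactness; without this the identifications $H^{0} \cong D^{\pm}$ and $H^{1} = 0$ cannot be read off directly.
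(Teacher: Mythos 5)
Your proposal is correct and follows essentially the same route as the paper: the paper's own argument is exactly the preceding discussion of the split-exact-over-$R_{0}$ sequence~\eqref{eq:R}, tensored with the levelwise free complex~$D^{+}$ to get $H^0(\mathfrak{D}^+)\cong D^+$ and $H^1(\mathfrak{D}^+)=0$, followed by the citation of Lemma~4.2 of~\cite{TVB}, with the minus case handled by the mirror-image sequence just as you describe.
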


\medbreak

Now let us start with a bounded chain complex~$C$ of finitely
generated free $R[t,t^{-1}]$-modules. For each chain module $C_{n}
\neq \{0\}$ we choose an isomorphism with $R[t,t^{-1}]^{k_{n}}$. Let
$\mathfrak{D} = \big( D^{-} \rTo D \lTo D^{+}\big)$ denote the
resulting complex of pre-sheaves according to
Proposition~\ref{exchcos}, and let $\mathfrak{D}^{\pm}$
and~$\mathfrak{D}^{\prime \pm}$ be the diagrams defined at the
beginning of this section.--- The structure map $D \lTo D^{+}$ has an
$R[t,t^{-1}]$-linear adjoint, $D \lTo D^{+} \tensor_{R[t]}
R[t,t^{-1}]$, which induces a map of diagrams
\begin{displaymath}
  \mathfrak{D}^{+} \quad \rTo \quad \Big( 0 \rTo 0 \lTo D \Big) \ ;
\end{displaymath}
upon application of~$\hh{H}$ this yields a map
$\pi^{+} \colon \hh{H}(\mathfrak{D}^{+}) \rTo D$. We have
similarly a map $\pi^{-} \colon \hh{H}(\mathfrak{D}^{-}) \rTo D$,
and analogous maps using $\mathfrak{D}^{\prime \pm}$
denoted~$\pi^{\prime \pm}$. All these fit into the commutative diagram
displayed in Fig.~\ref{fig:big}.

\begin{figure}[ht]
  \centering
  \begin{diagram}[LaTeXeqno]
    \label{eq:big}
    D^- &\rTo &D &\lTo&D^+\\
    \dTo^{\Delta^-} &&\dTo^{\id} &&\dTo^{\Delta^+}\\
    \hh{H}(\mathfrak{D}^-)&\rTo^{\pi^-}&D&\lTo^{\pi^+}&\hh{H}(\mathfrak{D}^+)\\
    \dTo^{\hh{H}(\zeta^-)} &&\dTo^{\id} &&\dTo^{\hh{H}(\zeta^+)}\\
    \hh{H}(\mathfrak{D}^{\prime-})&\rTo^{\pi^{\prime-}} &D
    &\lTo^{\pi^{\prime+}}&\hh{H}(\mathfrak{D}^{\prime+})
  \end{diagram}
  \caption{Commutative diagram}
  \label{fig:big}
\end{figure}

\begin{lemma}
  \label{lem:Hzeta}
  If $R[t,t^{-1}]$ is strongly graded, and if the two complexes $C
  \tensor_{R[t,t^{-1}]} R\nov{t}$ and $C \tensor_{R[t,t^{-1}]}
  R\nov{t^{-1}}$ have trivial homology, then the maps
  $\hh{H}(\zeta^-)$ and~$\hh{H}(\zeta^+)$ are
  quasi-isomorphisms.
\end{lemma}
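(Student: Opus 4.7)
The plan is to analyse $\hh{H}(\zeta^\pm)$ by identifying its mapping cone. Looking at the diagram~\eqref{eq:zeta}, the map $\zeta^+$ is the identity on the outer terms and the zero map on the middle term. Consequently, when we form the double complex $H(\mathfrak{D}^+)$ (two columns, indexed $-1$ and $0$), the map of double complexes $H(\zeta^+)$ is the identity on column~$0$ (containing $D^+ \tensor_{R[t]} R[t,t^{-1}] \,\oplus\, D^+ \tensor_{R[t]} R\powers{t}$) and the zero map on column~$-1$ (containing $D^+ \tensor_{R[t]} R\nov{t}$). Hence there is a short exact sequence of double complexes whose kernel is the column $D^+ \tensor_{R[t]} R\nov{t}$ in position~$-1$, which totalises to a short exact sequence of chain complexes
\begin{displaymath}
  0 \rTo (D^+ \tensor_{R[t]} R\nov{t})[1] \rTo \hh{H}(\mathfrak{D}^+)
  \rTo^{\hh{H}(\zeta^+)} \hh{H}(\mathfrak{D}^{\prime +}) \rTo 0 \ .
\end{displaymath}

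From the associated long exact sequence in homology, $\hh{H}(\zeta^+)$ is a quasi-isomorphism if and only if $D^+ \tensor_{R[t]} R\nov{t}$ is acyclic. To verify the latter, I would invoke the sheaf property: by Proposition~\ref{exchcos}, the strong grading hypothesis ensures that $\mathfrak{D}$ is a complex of \emph{sheaves}, so the adjoint of the structure map gives a chain isomorphism $D^+ \tensor_{R[t]} R[t,t^{-1}] \cong D \cong C$. Since $R[t] \subseteq R[t,t^{-1}] \subseteq R\nov{t}$, tensoring transitively gives
\begin{displaymath}
  D^+ \tensor_{R[t]} R\nov{t} \cong D^+ \tensor_{R[t]} R[t,t^{-1}] \tensor_{R[t,t^{-1}]} R\nov{t} \cong C \tensor_{R[t,t^{-1}]} R\nov{t} \ ,
\end{displaymath}
which is acyclic by hypothesis. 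The argument for $\hh{H}(\zeta^-)$ is verbatim the same, with $R\nov{t^{-1}}$ and $R[t^{-1}]$ in place of $R\nov{t}$ and $R[t]$, and invoking the triviality of $C \tensor_{R[t,t^{-1}]} R\nov{t^{-1}}$.

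The main potential pitfall is pedantic bookkeeping of signs and shifts in passing from the short exact sequence of bicomplexes to the totalised short exact sequence of chain complexes; but the homological conclusion (a quasi-isomorphism on the quotient whenever the kernel is acyclic) is robust against such conventions. The only place where the strong grading hypothesis enters in an essential way is in the sheaf identification $D^+ \tensor_{R[t]} R[t,t^{-1}] \cong D$, which is precisely what converts a hypothesis about Novikov homology of~$C$ into a statement about Novikov homology of~$D^+$.
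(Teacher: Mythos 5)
Your proposal is correct and rests on the same key step as the paper's proof: the chain of isomorphisms $D^{\pm}\tensor_{R[t^{\pm1}]}R\nov{t^{\pm1}}\cong D\tensor_{R[t,t^{-1}]}R\nov{t^{\pm1}}\cong C\tensor_{R[t,t^{-1}]}R\nov{t^{\pm1}}$ coming from the sheaf property of~$\mathfrak{D}$ in the strongly graded case, followed by the acyclicity hypothesis. The only (inessential) difference is the final formal step: the paper notes that $\zeta^{\pm}$ is a levelwise quasi-isomorphism of diagrams and cites that $\hh{H}$ preserves quasi-isomorphisms (\cite[Lemma~4.2]{TVB}), whereas you identify the kernel of the surjection $\hh{H}(\zeta^{\pm})$ as a shift of the middle column and conclude via the long exact sequence --- both are routine and valid.
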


\begin{proof}
  There is a chain of isomorphisms
  \begin{multline*}
    D^{\pm} \tensor_{R[t^{\pm 1}]} R\nov{t^{\pm 1}} \cong D^{\pm}
    \tensor_{R[t^{\pm 1}]} R[t,t^{-1}] \tensor_{R[t,t^{-1}]}
    R\nov{t^{\pm 1}} \\ \cong D \tensor_{R[t,t^{-1}]} R\nov{t^{\pm 1}}
    \cong C \tensor_{R[t,t^{-1}]} R\nov{t^{\pm 1}} \ ,
  \end{multline*}
  the second one due to the fact that $D$ is a sheaf in the strongly
  graded setting. By hypothesis the last complex is acyclic. This
  means that all vertical maps in the diagram~\eqref{eq:zeta} are
  quasi-isomorphisms, that is, $\zeta^{\pm}$ consists of
  quasi-isomorphisms. Hence application of~$\hh{H}$ results in a
  quasi-isomorphism~$\hh{H}(\zeta^{\pm})$
  by~\cite[Lemma~4.2]{TVB}.
\end{proof}

Recall that, by construction, $\mathfrak{D}_{n}$ is a finite direct
sum of diagrams of the form~$\oo(q,p)$, with $q+p \geq 0$. It follows
from the calculation in Proposition~\ref{hogrs} that $H^{1}
(\mathfrak{D}) = 0$ (levelwise) so that the inclusion $H^{0}
(\mathfrak{D}) \rTo \hh{H}(\mathfrak{D})$ is a quasi-isomorphism
\cite[Lemma~4.2]{TVB}. With Proposition~\ref{fgpro} this yields the
following result:

\begin{lemma}
  \label{lem:H0}
  The bounded chain complex $H^{0} (\mathfrak{D})$ is quasi-isomorphic
  to the complex $\hh{H}(\mathfrak{D})$. If $R[t,t^{-1}]$ is
  strongly graded, $H^{0} (\mathfrak{D})$ consists of finitely
  generated projective $R_{0}$-modules.\qed
\end{lemma}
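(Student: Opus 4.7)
The plan is to extract both claims from a levelwise application of Proposition~\ref{hogrs} to the sheaf summands making up $\mathfrak{D}$. By construction each $\mathfrak{D}_n$ is a direct sum of $k_n$ copies of a twisting sheaf $\oo(q_n,p_n)$ with $q_n+p_n\geq 0$. Proposition~\ref{hogrs}(1) therefore gives, for every~$n$, a split exact sequence showing that $H(\mathfrak{D}_n)$ is $R_0$-linearly homotopy equivalent to the chain complex concentrated in degree~$0$ whose only non-zero entry is a direct sum of $k_n$ copies of $\bigoplus_{k=-p_n}^{q_n} R_k$; in particular $H^1(\mathfrak{D}_n)=0$ for all~$n$. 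A glance at the explicit formul\ae{} for the splittings $\rho$ and $\sigma$ in the proof of Proposition~\ref{hogrs} confirms that they are right $R_0$-linear, which is the only subtle point to watch.

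For the first assertion I would then invoke \cite[Lemma~4.2]{TVB}, the two-column double complex comparison already used several times in~\S2: $\hh{H}(\mathfrak{D})=\tot H(\mathfrak{D})$ is the totalisation of a double complex concentrated in columns~$-1$ and~$0$, and the levelwise vanishing of $H^1(\mathfrak{D})$ just established forces the natural inclusion $H^0(\mathfrak{D})\to\hh{H}(\mathfrak{D})$ to be a quasi-isomorphism. Alternatively, one can identify $\hh{H}(\mathfrak{D})$ with the mapping cone (up to shift and sign) of $-d^{-}+d^{+}\colon D^{-}\oplus D^{+}\to D$ and read off the result from the associated long exact homology sequence, using the levelwise splittings to handle the kernel terms.

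For the second assertion, assume $R[t,t^{-1}]$ is strongly graded. The first paragraph has identified $H^0(\mathfrak{D})_n$ with a finite direct sum of homogeneous components~$R_k$ as right $R_0$-modules, and Proposition~\ref{fgpro} states that each such $R_k$ is finitely generated projective over~$R_0$ in the strongly graded setting. Since finite direct sums preserve finite generation and projectivity, and since boundedness of~$C$ transfers to $\mathfrak{D}$ and hence to $H^0(\mathfrak{D})$, the conclusion follows. I do not foresee any real obstacle here: the lemma is essentially a bookkeeping consequence of Propositions~\ref{hogrs} and~\ref{fgpro}, the only non-trivial ingredient being the strong-grading hypothesis which enters via Proposition~\ref{fgpro}.
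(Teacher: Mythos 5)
Your argument is correct and matches the paper's own reasoning: the paper likewise deduces $H^{1}(\mathfrak{D})=0$ levelwise from Proposition~\ref{hogrs} (using $q_n+p_n\geq 0$), cites \cite[Lemma~4.2]{TVB} for the quasi-isomorphism $H^{0}(\mathfrak{D})\to\hh{H}(\mathfrak{D})$, and obtains the projectivity statement by identifying $H^{0}(\mathfrak{D})_n$ with finite direct sums of components $R_k$ and applying Proposition~\ref{fgpro}. No further comment is needed.
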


\begin{proof}[Proof of Theorem~\ref{thm:main}, ``if'' part]
  As before we start with a bounded chain complex~$C$ of finitely
  generated free $R[t,t^{-1}]$-modules, and construct a complex of
  sheaves $\mathfrak{D} = \big( D^{-} \rTo D \lTo D^{+}\big)$
  according to Proposition~\ref{exchcos}, with $D \cong C$. We will
  also use the diagrams $\mathfrak{D}^{\pm}$ and~$\mathfrak{D}^{\prime
    \pm}$ as defined at the beginning of this section.

  Our hypothesis now is that $R[t,t^{-1}]$ is strongly graded. In this
  situation all the vertical maps in diagram~\eqref{eq:big} are
  quasi-isomorphisms, by Lemmas~\ref{lem:Delta}
  and~\ref{lem:Hzeta}. So by applying~$\hh{H}$ to the rows of
  the diagram we obtain a chain of maps
  \begin{equation}
    \label{eq:qi}
    H^{0}(\mathfrak{D}) \rTo^{\simeq} \hh{H}(\mathfrak{D})
    \rTo^{\simeq} \hh{H} \Big( \hh{H}(\mathfrak{D}^{\prime-})
    \rTo^{\pi^{\prime-}}  D \lTo^{\pi^{\prime+}} \hh{H}
    (\mathfrak{D}^{\prime+}) \Big) \ ;
  \end{equation}
  the first one is a quasi-isomorphism by Lemma~\ref{lem:H0}, the
  second because the functor $\hh{H}$ preserves quasi-isomorphisms
  \cite[Lemma~4.2]{TVB}.

  By explicitly spelling out the definitions, we see that the chain
  complex $\hh{H} \big( \hh{H} (\mathfrak{D}^{\prime-})
  \rTo^{\pi^{\prime-}} D \lTo^{\pi^{\prime+}} \hh{H}
  (\mathfrak{D}^{\prime+}) \big)$ contains the complex
  \begin{displaymath}
    \hh{H} \Big( D^{-} \tensor_{R[t^{-1}]} R[t,t^{-1}] \rTo D
    \lTo D^{+} \tensor_{R[t]} R[t,t^{-1}] \Big)
  \end{displaymath}
  as a retract. But the diagram~$\mathfrak{D}$ is a sheaf, making use
  of the strong grading again, so the maps $D^{\pm} \tensor_{R[t^{\pm
      1}]} R[t,t^{-1}] \rTo D$ are isomorphisms. It follows that the
  previous chain complex is isomorphic to $\hh{H} \big( D \rTo^{=}
  D \lTo^{=} D \big)$, and thus quasi-isomorphic to~$D \cong C$.

  Combined with~\eqref{eq:qi}, we thus see that in the derived
  category of~$R_{0}$ the complex~$C$ is a retract
  of~$H^{0}(\mathfrak{D})$. Both are bounded complexes of
  $R_{0}$-projective modules, the former by
  Corollary~\ref{cor:stays_projective}, the latter by
  Lemma~\ref{lem:H0}. It follows from general theory of derived
  categories that there are chain maps $\alpha \colon C \rTo H^{0}
  (\mathfrak{D})$ and $\beta \colon H^{0}(\mathfrak{D}) \rTo C$ with
  $\beta\alpha \simeq \id$. As $H^{0 }(\mathfrak{D})$ consists of
  finitely generated projective $R_{0}$-modules (Lemma~\ref{lem:H0}
  again), this proves that $C$~is $R_{0}$\nobreakdash-finitely
  dominated as desired.
\end{proof}

\section{Finite domination implies trivial Novikov homology}

From now on, and for the remainder of the paper, we suppose that the
$\bZ$-graded ring~$R[t,t^{-1}]$ admits a partition of unity
\(1=\sum_{j} x^{(-1)}_jy^{(1)}_j\) of type~\((-1,1)\), which we choose
once and for all.

\subsection*{Canonical resolution and algebraic tori}

For a given \(R[t,t^{-1}]\)-module~$C$, or a given chain complex~$C$
of such modules, we use the chosen partition of unity to define an
$R[t,t^{-1}]$-linear map
\begin{equation}
  \label{eq:mu}
  \mu \colon C\tensor_{R_0} R[t,t^{-1}] \rTo C\tensor_{R_0}
  R[t,t^{-1}] \ , \quad\!  c \tensor r \mapsto c \tensor r -
  \sum_{j} c x^{(-1)}_j \tensor y^{(1)}_j r \ .
\end{equation}
Note that for any $s \in R_{0}$ and any partition of unity $1 =
\sum_{\ell} u_{\ell} v_{\ell}$ of type~$(-1,1)$ there are equalities
\begin{multline*}
  \sum_{j} c x^{(-1)}_j \tensor y^{(1)}_j sr = \sum_{j, \ell} c
  x^{(-1)}_j \tensor y^{(1)}_j s u_\ell v_\ell r \\ %
  = \sum_{\ell, j} c x^{(-1)}_j y^{(1)}_j s u_\ell \tensor
  v_\ell r = \sum_{\ell} c s u_\ell \tensor v_\ell
  r \ .
\end{multline*}
Specialising to $u_{\ell} = x^{(-1)}_{\ell}$ and $v_{\ell} =
y^{(1)}_{\ell}$ yields that the map~$\mu$ is
$R_{0}$\nobreakdash-balanced, and hence well-defined. On the other
hand, specialising to $s=1$ shows that, contrary to appearance, the
map~$\mu$ does actually not depend on the choice of partition of
unity. --- It might be worth pointing out that the map~$\mu$ cannot
be defined in the absence of additional data; the strongly graded
structure of the ring enters the picture in a rather subtle form here.

\begin{proposition}[Canonical resolution]
  \label{splites}
  For any $R[t,t^{-1}]$-module~$M$ there is a sequence of
  \(R[t,t^{-1}]\)-modules
  \begin{equation}
    \label{eq:can_res}
    0\rTo M\tensor_{R_0} R[t,t^{-1}] \rTo^{\mu} M\tensor_{R_0}
    R[t,t^{-1}] \rTo^{\pi} M\rTo 0 \ ,
  \end{equation}
  where $\pi (m \tensor r) = mr$ and $\mu$~is as in~\eqref{eq:mu}. The
  sequence is natural in~$M$. If $R[t,t^{-1}]$ is strongly graded then
  the sequence is split exact as a sequence of right \(R_0\)-modules,
  and hence is exact (but possibly non-split) as a sequence of right
  \(R[t,t^{-1}]\)-modules.
\end{proposition}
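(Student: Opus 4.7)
My strategy is to construct an explicit right $R_0$-linear map $\tau\colon B\to B$, where $B:=M\tensor_{R_0}R[t,t^{-1}]$, that satisfies $\mu\tau+\sigma\pi=\id_B$ for the obvious section $\sigma(m):=m\tensor 1$ of $\pi$. Once this single identity is established, both exactness and split exactness as right $R_0$-modules follow immediately: any $z\in\ker\pi$ satisfies $z=\mu\tau(z)\in\mathrm{im}\,\mu$, while the right $R_0$-linear $\sigma$ splits $\pi$ on the nose.

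The preliminary checks are direct. One computes $\pi\mu(c\tensor r)=cr-\sum_j c x^{(-1)}_j y^{(1)}_j r = 0$ by the partition of unity, and $\pi(m\tensor 1)=m$ shows $\pi$ is surjective; naturality in $M$ is immediate from the formulas, and $\sigma$ is right $R_0$-linear since $R_0$-elements pass across the balanced tensor. For injectivity of $\mu$ I use the $\bZ$-grading $B=\bigoplus_n B_n$ with $B_n=M\tensor_{R_0}R_n$: write $\mu=\id-\phi$ where $\phi(c\tensor r):=\sum_j c x^{(-1)}_j\tensor y^{(1)}_j r$ satisfies $\phi(B_n)\subseteq B_{n+1}$. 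Any $x\in\ker\mu$ then has its lowest-grade component forced to vanish, giving $x=0$.

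To build $\tau$ I use both types of partition of unity: the chosen one of type $(-1,1)$ and an auxiliary one of type $(1,-1)$, which exists by Proposition~\ref{prop:characterisation_strongly_graded}; let $\tilde\phi$ be defined analogously to $\phi$ but from the second partition, so that $\tilde\phi$ lowers grade by one. The essential lemma, proved by an iterated partition-of-unity calculation in which $r\cdot x^{(-1)}_{j_1}\cdots x^{(-1)}_{j_n}\in R_n R_{-n}\subseteq R_0$ is moved across the balanced tensor, reads
\[
\phi^n(cr\tensor 1) = c\tensor r \qquad \text{for } r\in R_n,\ n\geq 0,
\]
together with its mirror image $\tilde\phi^n(cr\tensor 1)=c\tensor r$ for $r\in R_{-n}$. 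Set $\tau(c\tensor r):=-\sum_{k=0}^{n-1}\phi^k(cr\tensor 1)$ for $r\in R_n$ with $n>0$, define $\tau:=0$ on $B_0$, and set $\tau(c\tensor r):=\sum_{k=1}^{-n}\tilde\phi^k(cr\tensor 1)$ for $r\in R_n$ with $n<0$, then extend additively. A grade-by-grade telescoping calculation, using the essential lemma at the top index, yields $\mu\tau+\sigma\pi=\id_B$; right $R_0$-linearity of $\tau$ follows because $\phi$ and $\tilde\phi$ are right $R[t,t^{-1}]$-linear and $(c,r)\mapsto cr$ is $R_0$-balanced.

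The technical heart of the argument is the essential lemma $\phi^n(cr\tensor 1)=c\tensor r$, and this is precisely where the strong grading is indispensable: it supplies partitions of unity of every type (so that both $\phi$ and $\tilde\phi$ are at our disposal) and guarantees $R_nR_{-n}\subseteq R_0$, which is what enables the crucial cancellation across the tensor. The piecewise formula for $\tau$ has no chance of being well-defined without this, consistent with the fact that the proposition's splitting statement requires strong grading.
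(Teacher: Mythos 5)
Your proposal is correct, and it reorganises the argument in a genuinely different (and arguably cleaner) way than the paper, while resting on the same underlying mechanism of partition-of-unity cancellation across the balanced tensor product. The paper chooses, for every $k$, a partition of unity of type $(-k,k)$, writes down $\tau$ degreewise by explicit formulas, proves $\tau\mu=\id$ in full detail, and merely asserts the complementary identity $\mu\tau+\iota\pi=\id$; injectivity of $\mu$ is then a consequence of $\tau\mu=\id$. You instead prove exactly the identity the paper omits, $\mu\tau+\sigma\pi=\id$, package $\tau$ as truncated geometric series in the degree-raising operator $\phi=\id-\mu$ and a degree-lowering companion $\tilde\phi$ built from one auxiliary partition of unity of type $(1,-1)$, and dispose of injectivity of $\mu$ by a lowest-degree argument that needs no strong grading at all --- a small bonus not visible in the paper's proof. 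Your key lemma $\phi^{\,n}(cr\tensor 1)=c\tensor r$ for $r\in R_n$ checks out (inductively, moving $r\,x^{(-1)}_{j_1}\cdots x^{(-1)}_{j_n}\in R_0$ across the tensor and collapsing the sums from the inside out), and with it the telescoping in positive degrees is immediate; in negative degrees the telescope additionally uses $\phi\tilde\phi=\id$, which is the same one-line cancellation but should be stated explicitly, as should the routine check that $\tilde\phi$ is $R_0$-balanced (the same verification the paper performs for $\mu$). Note also that your $\phi^k$ and $\tilde\phi^k$ amount to using the product partitions of unity of Proposition~\ref{prop:characterisation_strongly_graded}, so your $\tau$ is a specific instance of the paper's, just with the verification organised so that the telescoping is transparent. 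One small misstatement in your closing remark: $R_nR_{-n}\subseteq R_0$ holds in any $\bZ$-graded ring by definition; what the strong grading actually buys is the existence of the partitions of unity themselves (in particular of type $(1,-1)$ for $\tilde\phi$).
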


\begin{proof}
  We first note that \(\pi\mu=0\) as
  \begin{multline*}
    \pi\mu(m \tensor r) = \pi \big( m\tensor r -\sum_{j} m
    x^{(-1)}_j\tensor y^{(1)}_j r \big) \\ = m r -\sum_{j} m
    x^{(-1)}_j y^{(1)}_j r = mr - m1r = 0 \ .
  \end{multline*}

  Let us now suppose that $R[t,t^{-1}]$ is strongly graded. In
  addition to our fixed partition of unity $1 = \sum_{\ell_{1}}
  x^{(-1)}_{\ell_{1}} y^{(1)}_{\ell_{1}}$ we choose for all
  \(k\in\bZ\), $k \neq 1$, a partition of unity $1 = \sum_{\ell_{k}}
  x^{(-k)}_{\ell_{k}} y^{(k)}_{\ell_{k}}$ of type \((-k,k)\); as
  before this is understood to be a finite sum with
  $x^{(-k)}_{\ell_{k}} \in R_{-k}$ and $y^{(k)}_{\ell_{k}} \in
  R_{k}$. Such partitions of unity exist by
  Proposition~\ref{prop:characterisation_strongly_graded}.

  We denote by~$\iota$ the right $R_{0}$-linear map
  \begin{displaymath}
    \iota \colon M \rTo M\tensor_{R_0} R[t,t^{-1}] \ ,\quad  m
    \mapsto m \tensor 1 \ ;
  \end{displaymath}
  clearly $\pi \iota = \id_{M}$. Next, we define an $R_{0}$-linear map
  \begin{displaymath}
    \tau \colon M\tensor_{R_0} R[t,t^{-1}] \rTo M \tensor_{R_0}
    R[t,t^{-1}] \ ;
  \end{displaymath}
  as $M\tensor_{R_0} R[t,t^{-1}] \cong \bigoplus_{n \in \bZ} M
  \tensor_{R_{0}} R_{n}$ as a right $R_{0}$-module it will be enough
  to specify the restrictions $\tau_{n} = \tau|_{M \tensor
    R_{n}}$. For $m \in M$ and $r_{n} \in R_{n}$ these are given by
  \begin{displaymath}
    \tau_{n} (m \tensor r_{n}) =
    \begin{cases}
      - \sum_{k=1}^{n\hphantom{--1}} \sum_{\ell_k} \big( m
      x^{(k)}_{\ell_k} \tensor y^{(-k)}_{\ell_k} r_n \big) & \text{if
      } n > 0 \ , \\%
      \hphantom{-} 0 & \text{if } n = 0 \ , \\%
      \hphantom{-} \sum_{k=0}^{-n-1} \sum_{j_{k}} \big( m
      x^{(-k)}_{j_{k}} \tensor y^{(k)}_{j_{k}} r_{n} \big) & \text{if
      } n < 0 \ .
    \end{cases}
  \end{displaymath}

  The map $\tau$ satisfies $\tau \mu = \id$; we will verify $\tau \mu
  (m \tensor r_{n}) = m \tensor r_{n}$ for $n \geq 1$, the case $n
  \leq 0$ being similar. So let $m \in M$ and $r_{n} \in R_{n}$, for
  some $n \geq 1$. Then
  \begin{multline*}
    \tau \mu (m \tensor r_{n}) = \tau \Big( m\tensor r_n - \sum_{j} m
    x^{(-1)}_{j}\tensor y^{(1)}_{j} r_n\Big) \\ %
    = \tau(m\tensor r_n) -\tau\Big(\sum_{j} m x^{(-1)}_{j}\tensor
    y^{(1)}_{j} r_n\Big) \\ %
    = \tau_n(m\tensor r_n) -\tau_{n+1}\Big(\sum_{j} m
    x^{(-1)}_{j}\tensor y^{(1)}_{j} r_n\Big) \ .
  \end{multline*}
  Now by definition
  \begin{displaymath}
    \tau_n(m\tensor r_n)=-\sum_{k=1}^{n} \sum_{\ell_k} \big( m
    x^{(k)}_{\ell_k} \tensor y^{(-k)}_{\ell_k} r_n \big)
  \end{displaymath}
  while
  \begin{displaymath}
    \tau_{n+1} \Big( \sum_{j} m x^{(-1)}_{j}\tensor y^{(1)}_{j} r_n \Big) 
    = - \sum_{k=1}^{n+1} \Big( \sum_{\ell_k} \sum_{j} m x^{(-1)}_{j}
    x^{(k)}_{\ell_k} \tensor y^{(-k)}_{\ell_k} y^{(1)}_{j} r_n \Big) \
    \! .
  \end{displaymath}
  The last term in parentheses, for any fixed~\(k\), can be
  simplified:
  \begin{align*}
    \sum_{\ell_k} \sum_{j} & m x^{(-1)}_{j}x^{(k)}_{\ell_k}\tensor
    y^{(-k)}_{\ell_k} y^{(1)}_{j} r_n \\ %
    & \underset{(\ddagger)}= \sum_{\ell_k} \sum_{j} m x^{(-1)}_{j}
    x^{(k)}_{\ell_k} \tensor y^{(-k)}_{\ell_k} y^{(1)}_{j} \cdot %
    \Big( \sum_{\ell_{k-1}} x^{(k-1)}_{\ell_{k-1}}
    y^{(-k+1)}_{\ell_{k-1}} \Big) \cdot r_{n} \\ %
    & = \sum_{\ell_{k-1}} \sum_{\ell_k} \sum_{j} m x^{(-1)}_{j}
    x^{(k)}_{\ell_k} \tensor y^{(-k)}_{\ell_k} y^{(1)}_{j}
    x^{(k-1)}_{\ell_{k-1}} y^{(-k+1)}_{\ell_{k-1}} r_n \\%
    & \underset{(\dagger)}= \sum_{\ell_{k-1}} \sum_{\ell_k} \sum_{j} m
    x^{(-1)}_{j} x^{(k)}_{\ell_k} y^{(-k)}_{\ell_k} y^{(1)}_{j}
    x^{(k-1)}_{\ell_{k-1}} \tensor y^{(-k+1)}_{\ell_{k-1}} r_n \\ %
    & \underset{(\ddagger)}= \sum_{\ell_{k-1}} m
    x^{(k-1)}_{\ell_{k-1}} \tensor  y^{(-k+1)}_{\ell_{k-1}} r_n
  \end{align*}
  where at~$(\dagger)$ we have used that $y^{(-k)}_{\ell_k}
  y^{(1)}_{j} x^{(k-1)}_{\ell_{k-1}} \in R_{0}$, and at~$(\ddagger)$
  we have used that $\sum_{\ell_{k}} x^{(k)}_{\ell_{k}}
  y^{(-k)}_{\ell_{k}} = \sum_{\ell_{k-1}} x^{(k-1)}_{\ell_{k-1}}
  y^{(-k+1)}_{\ell_{k-1}} = \sum_{j} x^{(-1)}_{j} y^{(1)}_{j} = 1$. It
  follows together with the previous expressions that $\tau \mu (m
  \tensor r_{n})$ equals
  \begin{multline*}
    -\sum_{k=1}^{n} \sum_{\ell_k} \big( m x^{(k)}_{\ell_k} \tensor
    y^{(-k)}_{\ell_k} r_n \big) + \sum_{k=1}^{n+1} \sum_{\ell_{k-1}}
    \big( m x^{(k-1)}_{\ell_{k-1}} \tensor y^{(-k+1)}_{\ell_{k-1}} r_n
    \big) \\ %
    = \sum_{\ell_{0}}  m x^{(0)}_{\ell_{0}} \tensor y^{(0)}_{\ell_{0}}
    r_{n} = m \tensor r_{n} \ .
  \end{multline*}

  \medbreak

  To show that our sequence~\eqref{eq:can_res} is split exact when
  considered as a sequence of~$R_{0}$\nobreakdash-modules it remains
  only to prove that \(\mu\tau+\iota\pi=\id_{M\tensor
    R[t,t^{-1}]}\). The calculation is similar to the one just
  finished, making use of existence of partitions of unity in exactly
  the same manner. We omit the details.
\end{proof}

\begin{corollary}
  \label{cor:is_torus}
  For any chain complex~\(C\) of \(R[t,t^{-1}]\)-modules there is a
  quasi-isomorphism \(\cone(\mu) \rTo^{\sim} C\).
\end{corollary}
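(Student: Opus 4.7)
The plan is to lift Proposition~\ref{splites} from modules to chain complexes and then appeal to the standard comparison between the mapping cone of an injective chain map and its cokernel.

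First I would observe that the maps~$\mu$ and~$\pi$ appearing in~\eqref{eq:can_res} are natural in the module argument; in particular, when applied degreewise to the chain complex~$C$, they commute with the differentials. Applying Proposition~\ref{splites} to each chain module~$C_{n}$ and assembling the resulting sequences therefore yields a short exact sequence of chain complexes of right $R[t,t^{-1}]$-modules,
\begin{displaymath}
  0 \rTo C \tensor_{R_{0}} R[t,t^{-1}] \rTo^{\mu} C \tensor_{R_{0}} R[t,t^{-1}] \rTo^{\pi} C \rTo 0 \ ,
\end{displaymath}
where the exactness at each chain level uses the strong grading of~$R[t,t^{-1}]$ through Proposition~\ref{splites}.

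The second step is to invoke the standard homological-algebra fact that, given any short exact sequence of chain complexes $0 \rTo X \rTo^{f} Y \rTo^{g} Z \rTo 0$, the chain map $\cone(f) \rTo Z$ induced by~$g$ on the $Y$\nobreakdash-summand of the cone and by the zero map on the $X$\nobreakdash-summand is a quasi-isomorphism. One verifies this either by comparing the tautological long exact sequence of the mapping cone with the long exact sequence of the short exact sequence, or by observing that the kernel complex of $\cone(f) \rTo Z$ is isomorphic to $\cone(\id_{X})$ and hence is contractible. Specialising to $f = \mu$ and $g = \pi$ produces the desired quasi-isomorphism $\cone(\mu) \rTo C$.

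I do not anticipate a substantive obstacle here: the essential work has already been carried out in Proposition~\ref{splites}, where the strong grading of~$R[t,t^{-1}]$ was used to construct the canonical resolution. What remains is a routine passage from a single module to a chain complex of modules, together with a well-known general fact about mapping cones of monomorphisms.
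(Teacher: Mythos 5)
Your proposal is correct and takes essentially the same route as the paper: the paper also applies Proposition~\ref{splites} levelwise to obtain the short exact sequence of chain complexes $0 \to C \tensor_{R_{0}} R[t,t^{-1}] \rTo^{\mu} C \tensor_{R_{0}} R[t,t^{-1}] \rTo^{\pi} C \to 0$ and then invokes the standard fact that the canonical map $\cone(\mu) \to C$ is a quasi-isomorphism. You merely make explicit the naturality of $\mu$, $\pi$ and the cone-versus-quotient comparison, which the paper leaves implicit.
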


\begin{proof}
  By the previous Proposition there is a short exact sequence of chain
  complexes
  \begin{displaymath}
    0 \rTo C \tensor_{R_{0}} R[t,t^{-1}] \rTo^{\mu} C \tensor_{R_{0}}
    R[t,t^{-1}] \rTo^{\pi} C \rTo 0 \ .
  \end{displaymath}
  Thus the canonical map $\cone(\mu) \rTo C$ is a quasi-isomorphism.
\end{proof}

\begin{definition}
  The mapping cone of~$\mu$ in the previous Corollary is called the
  \emph{algebraic torus of~\(C\)} and denoted~\(\T(C)\).
\end{definition}

\subsection*{The Mather trick for the algebraic torus}

Let \(C\) be an \(R[t,t^{-1}]\)-module chain complex, and let \(D\) be
an \(R_0\)-module chain complex. Let $\alpha \colon C \rTo D$ and
$\beta \colon D \rTo C$ be $R_{0}$-linear chain maps and $H$ a chain
homotopy such that $H \colon \beta \alpha\simeq \id_{C}$; that is, $dH
+ Hd = \id_{C} - \beta \alpha$ where $d$~is the differential of~$C$.
Define
\begin{equation}
  \label{eq:map_nu}
  \nu\colon D \tensor_{R_0} R[t,t^{-1}]\rTo D\tensor_{R_0}
  R[t,t^{-1}]
\end{equation}
by the formula
$\nu=(\alpha\tensor\id)\circ\mu\circ(\beta\tensor\id)$. Then the
diagram
\begin{diagram}
  C \tensor_{R_0} R[t,t^{-1}]&\rTo[l>=3em]^{\mu} &C\tensor_{R_0}
  R[t,t^{-1}] \\%
  \dTo^{\alpha\tensor \id}&&\dTo^{\alpha\tensor \id}\\%
  D \tensor_{R_0} R[t,t^{-1}]& \rTo^{\nu} &D\tensor_{R_0} R[t,t^{-1}]
\end{diagram}
is homotopy commutative with homotopy
\begin{displaymath}
  J = (\alpha \tensor \id) \circ \mu \circ (H \tensor \id) \colon \nu
  \circ (\alpha \tensor \id) \simeq (\alpha \tensor \id) \circ \mu \ .
\end{displaymath}
This homotopy induces a preferred map of $R[t,t^{-1}]$-module chain
complexes
\begin{displaymath}
  \alpha_{*} = \begin{pmatrix} \alpha \tensor \id & 0 \\ J & \alpha
    \tensor \id \end{pmatrix} \colon \T(C) = \cone (\mu) \rTo
  \cone(\nu) \ .
\end{displaymath}
If $\alpha$ is a quasi-isomorphism and $R[t,t^{-1}]$ is strongly
graded then $\alpha \tensor \id$ is a quasi-isomorphism as well;
indeed, the functor $\,\cdot\, \tensor_{R_{0}} R[t,t^{-1}]$ is exact
in the strongly graded case by Proposition~\ref{fgpro}. We obtain the
following result analogous to the \textsc{Mather} trick in the
topological context \cite[``Whitehead Lemma'', \S2]{RFDNR}:

\begin{lemma}[\textsc{Mather} trick]
  \label{lem:compare_mu_nu}
  Let \(C\) be an \(R[t,t^{-1}]\)-module chain complex, and let \(D\)
  an \(R_0\)-module chain complex. Let $\alpha \colon C \rTo D$ and
  $\beta \colon D \rTo C$ be $R_{0}$-linear chain maps such that
  $\beta \alpha \simeq \id_{C}$ via a specified homotopy. Then there
  is a preferred map $\alpha_{*} \colon \T(C) \rTo \cone(\nu)$.  If in
  addition $\alpha$ is a quasi-isomorphism and $R[t,t^{-1}]$ is
  strongly graded, $\alpha_{*} \colon \T(C) \rTo \cone(\nu)$ is a
  quasi-isomorphism.\qed
\end{lemma}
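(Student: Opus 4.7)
The plan is to work with the explicit formula for $\alpha_{*}$ given in the paragraph preceding the lemma, verify that it defines an honest chain map $\T(C) \rTo \cone(\nu)$, and then deduce the quasi-isomorphism assertion via a flat base-change argument combined with the five lemma.

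First I would check that the displayed matrix really specifies a chain map. Writing out the mapping-cone differentials of $\T(C) = \cone(\mu)$ and $\cone(\nu)$, the condition reduces to the identity
\begin{displaymath}
  d \circ J + J \circ d = (\alpha \tensor \id) \circ \mu - \nu \circ (\alpha \tensor \id),
\end{displaymath}
where $d$ is the internal differential on the tensor products $C \tensor_{R_{0}} R[t,t^{-1}]$ and $D \tensor_{R_{0}} R[t,t^{-1}]$. Since $\mu$, $\nu$, $\alpha \tensor \id$ and $\beta \tensor \id$ are already chain maps, this identity collapses, after factoring out $\alpha \tensor \id$ on the left and $\mu \tensor \id$ on the right, to the homotopy relation $dH + Hd = \id_{C} - \beta \alpha$ tensored with $R[t,t^{-1}]$. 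This is a routine bookkeeping exercise with signs.

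For the quasi-isomorphism statement, assume now that $\alpha$ is a quasi-isomorphism and $R[t,t^{-1}]$ is strongly graded. By Proposition~\ref{fgpro} each homogeneous component~$R_{k}$ is finitely generated projective both as a left and as a right $R_{0}$-module, so $R[t,t^{-1}] = \bigoplus_{k \in \bZ} R_{k}$ is in particular flat as a left $R_{0}$-module. Consequently the functor $\cdot \tensor_{R_{0}} R[t,t^{-1}]$ preserves quasi-isomorphisms, and both instances of $\alpha \tensor \id$ that appear as the diagonal entries of the matrix defining $\alpha_{*}$ are quasi-isomorphisms.

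Finally I would invoke the standard short exact sequence associated to a mapping cone: $\T(C)$ sits in
\begin{displaymath}
  0 \rTo C \tensor_{R_{0}} R[t,t^{-1}] \rTo \T(C) \rTo \big(C \tensor_{R_{0}} R[t,t^{-1}]\big)[1] \rTo 0,
\end{displaymath}
and similarly for $\cone(\nu)$, and by construction $\alpha_{*}$ is compatible with these sequences, the outer maps being $\alpha \tensor \id$ in the appropriate degrees. Passing to homology produces a ladder of long exact sequences; the five lemma then forces $\alpha_{*}$ itself to be a quasi-isomorphism. The main obstacle I anticipate is purely notational, namely the sign-tracking in the chain-map verification of the first paragraph; once that is in place, flatness of $R[t,t^{-1}]$ over $R_{0}$ turns the homological conclusion into a standard formality.
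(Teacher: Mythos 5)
Your proposal is correct and follows essentially the same route as the paper: the matrix $\alpha_{*}$ built from the homotopy $J=(\alpha\tensor\id)\circ\mu\circ(H\tensor\id)$ is a chain map precisely because $dJ+Jd$ collapses to $(\alpha\tensor\id)\circ\mu\circ\big((\id_{C}-\beta\alpha)\tensor\id\big)=(\alpha\tensor\id)\mu-\nu(\alpha\tensor\id)$, and the quasi-isomorphism claim follows since $R[t,t^{-1}]$ is projective (hence flat) over $R_{0}$ by Proposition~\ref{fgpro}, so $\alpha\tensor\id$ is a quasi-isomorphism. Your explicit five-lemma argument on the cone short exact sequences merely spells out the step the paper leaves implicit, so there is nothing substantively different to flag.
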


\begin{corollary}\label{findomimp}
  Suppose $R[t,t^{-1}]$ is strongly graded. Given a bounded below
  chain complex~\(C\) of projective \(R[t,t^{-1}]\)-modules, a bounded
  below chain complex~\(D\) of projective \(R_0\)\nobreakdash-modules,
  and an $R_{0}$-homotopy equivalence \(\alpha \colon C \rTo^{\simeq}
  D\), there is a homotopy equivalence
  \begin{displaymath}
    C \tensor_{R[t,t^{-1}]} R\nov{t^{\pm 1}} \simeq \cone(\nu)
    \tensor_{R[t,t^{-1}]} R\nov{t^{\pm 1}} \ .
  \end{displaymath}
\end{corollary}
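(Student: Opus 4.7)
The plan is to assemble the zig-zag of chain maps
\begin{displaymath}
  \cone(\nu) \lTo^{\alpha_{*}} \T(C) \rTo^{\pi_{*}} C
\end{displaymath}
---where $\pi_{*}$ is the canonical quasi-isomorphism of Corollary~\ref{cor:is_torus} and $\alpha_{*}$ is the quasi-isomorphism furnished by the Mather trick Lemma~\ref{lem:compare_mu_nu}---and to promote it to a zig-zag of chain homotopy equivalences of $R[t,t^{-1}]$-module complexes. At that point application of the additive functor $-\tensor_{R[t,t^{-1}]} R\nov{t^{\pm 1}}$ will yield the desired homotopy equivalence (and does so for both choices of sign simultaneously, as the argument does not distinguish them).

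The first step is to verify that all three complexes in the zig-zag are bounded below chain complexes of projective right $R[t,t^{-1}]$-modules. For $C$ this is hypothesis. For $\T(C)=\cone(\mu)$ and $\cone(\nu)$ it reduces to the observation that $C \tensor_{R_{0}} R[t,t^{-1}]$ and $D \tensor_{R_{0}} R[t,t^{-1}]$ are levelwise projective over $R[t,t^{-1}]$: the complex $C$ is projective over $R_{0}$ by Corollary~\ref{cor:stays_projective} (this uses the strong grading), $D$ is so by assumption, and the induction functor $- \tensor_{R_{0}} R[t,t^{-1}]$, being left adjoint to restriction along $R_{0} \hookrightarrow R[t,t^{-1}]$, carries projectives to projectives. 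Boundedness below of the cones is immediate.

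The second step is to produce $\alpha_{*}$. The given $R_{0}$-homotopy equivalence is in particular an $R_{0}$-quasi-isomorphism; choosing any $R_{0}$-homotopy inverse $\beta$ together with a specified homotopy $H \colon \beta \alpha \simeq \id_{C}$ allows us to apply Lemma~\ref{lem:compare_mu_nu} in the strongly graded setting to obtain the quasi-isomorphism $\alpha_{*} \colon \T(C) \rTo \cone(\nu)$.

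Finally, the standard fact that a quasi-isomorphism between bounded below chain complexes of projective modules is automatically a chain homotopy equivalence (its mapping cone is a bounded below acyclic complex of projectives, hence contractible by an inductive bottom-up construction of a splitting) promotes both $\pi_{*}$ and $\alpha_{*}$ to honest chain homotopy equivalences of $R[t,t^{-1}]$-module complexes. Since any additive functor preserves chain homotopies, tensoring over $R[t,t^{-1}]$ with $R\nov{t^{\pm 1}}$ preserves these equivalences, giving
\begin{displaymath}
  C \tensor_{R[t,t^{-1}]} R\nov{t^{\pm 1}} \simeq \T(C) \tensor_{R[t,t^{-1}]} R\nov{t^{\pm 1}} \simeq \cone(\nu) \tensor_{R[t,t^{-1}]} R\nov{t^{\pm 1}} \ ,
\end{displaymath}
as desired. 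The only real technical point is the projectivity bookkeeping in the first step; the remainder is a formal consequence of Corollary~\ref{cor:is_torus}, Lemma~\ref{lem:compare_mu_nu}, and the bounded-below-projective lifting principle.
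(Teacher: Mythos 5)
Your proof is correct and follows essentially the same route as the paper's: the zig-zag $C \leftarrow \T(C) \rightarrow \cone(\nu)$ obtained from Corollary~\ref{cor:is_torus} and the Mather trick (Lemma~\ref{lem:compare_mu_nu}), upgraded to chain homotopy equivalences using bounded-below complexes of projective $R[t,t^{-1}]$-modules, and then tensored with the Novikov rings. The only (harmless) difference is that you also verify projectivity of the intermediate complex $\T(C)$ so as to promote each quasi-isomorphism separately, whereas the paper only invokes that the two endpoints $C$ and $\cone(\nu)$ are bounded below complexes of projectives.
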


\begin{proof}
  From the previous Lemma and Corollary~\ref{cor:is_torus} we know
  that there are quasi-isomorphisms $C \lTo \T(C) \rTo^{\alpha_{*}}
  \cone(\nu)$. As both $C$ and~$\cone(\nu)$ are bounded below and
  consist of projective $R[t,t^{-1}]$-modules, these two complexes are
  actually homotopy equivalent. As taking tensor products preserves
  homotopy equivalences we have proven the claim.
\end{proof}

\subsection*{Bicomplexes and truncated powers}

We extend our portfolio of homological techniques further by
re-writing the complex~$\cone(\nu)$ as the totalisation of a
bicomplex, and by introducing twisted truncated powers.

Let \(C\) be an \(R[t,t^{-1}]\)-module chain complex, and let \(D\) an
\(R_0\)-module chain complex. Let $\alpha \colon C \rTo D$ and $\beta
\colon D \rTo C$ be $R_{0}$-linear chain maps. Define
$\nu=(\alpha\tensor\id)\circ\mu\circ(\beta\tensor\id)$ as
in~\eqref{eq:map_nu}. Let $\zeta_{n,m}$ denote the $R_{0}$-linear map
\begin{displaymath}
  D_{m} \tensor_{R_{0}} R_{n} \rTo D_{m} \tensor_{R_{0}} R_{n+1} \ ,
  \quad z \tensor r \mapsto \sum_{j} \alpha \big( \beta(z)
  x^{(-1)}_{j} \big) \tensor y^{(1)}_{j} r \ ,
\end{displaymath}
and let $E_{\bullet, \bullet}$ denote the bicomplex of right
$R_{0}$-modules given by
\begin{align}
  E_{n,m} = \big( D_{n+m-1} \tensor_{R_0} R_{-n} \big) %
  \, & \oplus\, %
  \big( D_{n+m} \tensor_{R_0} R_{-n} \big) %
  \label{eq:E} \\ %
\intertext{with differentials}%
  \notag d_{H} =
  \begin{pmatrix}
    0                       & 0 \\
    \zeta_{-n, n+m}  & 0
  \end{pmatrix}
  & \colon E_{n,m} \rTo E_{n-1, m} \\%
  \intertext{and}%
  d_{V} =
  \begin{pmatrix}
    -d \tensor \id           & 0             \\
    \alpha \beta \tensor \id & d \tensor \id
  \end{pmatrix}
  & \colon E_{n, m} \rTo E_{n, m-1}
  \label{eq:dV}
\end{align}
where $d$ is the differential of the chain complex~$D$.

The {\it totalisation\/} $\tot(E_{\bullet, \bullet})$ is the chain
complex with $\tot(E_{\bullet, \bullet})_{\ell} = \bigoplus_{n+m =
  \ell} E_{n,m}$ and differential $d_{H} + d_{V}$. More explicitly, we
have an identification
\begin{multline*}
  \tot(E_{\bullet, \bullet})_{\ell} = \bigoplus_{n \in \bZ}
  E_{-n,\ell+n} = \bigoplus_{n \in \bZ} \Big( \big( D_{\ell-1}
  \tensor_{R_0} R_{n} \big) \Big)  \,\oplus\,  \big( D_{\ell}
  \tensor_{R_0} R_{n} \big) \\ %
  = \big( D_{\ell-1} \tensor_{R_{0}} R[t,t^{-1}] \big) \,\oplus\, \big(
  D_{\ell} \tensor_{R_{0}} R[t,t^{-1}] \big) \ ,
\end{multline*}
under which the differential $d = d_{H} + d_{V}$ coincides with the
differential of~$\mathrm{cone}(\nu)$. A straightforward calculation
then shows that $d_{H}$ and~$d_{V}$ are anti-commuting
differentials. We summarise the construction:

\begin{lemma}
  The data listed above yields a bicomplex in the sense that $d_{H}
  \circ d_{H} = 0$, $d_{H} \circ d_{V} = - d_{V} \circ d_{H}$ and
  $d_{V} \circ d_{V} = 0$.  Its totalisation~$\tot(E_{\bullet,
    \bullet})$ is isomorphic to~$\cone(\nu)$.\qed
\end{lemma}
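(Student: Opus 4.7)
The statement has four ingredients: the three bicomplex identities $d_H \circ d_H = 0$, $d_V \circ d_V = 0$, $d_H \circ d_V = -d_V \circ d_H$, and the identification $\tot(E_{\bullet,\bullet}) \cong \cone(\nu)$. The plan is to verify each by direct computation with the block matrices defining $d_H$ and $d_V$. The only algebraic ingredients needed are that $\alpha$ and $\beta$ are chain maps and that $d$ squares to zero. I do not expect an essential obstacle: the conceptual work has already gone into the definitions of $\nu$, $\mu$, and $\zeta_{n,m}$, and what remains is careful bookkeeping of indices and signs.

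The first two identities require little thought. For $d_H \circ d_H = 0$, the matrix defining $d_H$ has only a single non-zero block (a $(2,1)$-entry), so its square vanishes automatically. For $d_V \circ d_V = 0$ the two diagonal blocks of $(d_V)^{2}$ give $\pm (d \tensor \id)^{2}$, which vanishes since $d^{2}=0$; the $(2,1)$-block equals $\bigl(d \circ \alpha\beta - \alpha\beta \circ d\bigr) \tensor \id$, and this vanishes because $\alpha$ and $\beta$ are both chain maps, giving $d(\alpha\beta) = (\alpha d)\beta = \alpha(d\beta) = (\alpha\beta)d$.

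For the anti-commutativity $d_H \circ d_V + d_V \circ d_H = 0$, block multiplication shows that the only possibly non-zero entry is in the $(2,1)$-slot and reduces, up to sign, to the commutator $\zeta \circ (d \tensor \id) - (d \tensor \id) \circ \zeta$. This commutator vanishes: using the explicit formula for $\zeta$ together with the fact that $\alpha$ and $\beta$ are chain maps yields $(d \tensor \id)\zeta(z \tensor r) = \sum_{j} d\alpha(\beta(z) x^{(-1)}_{j}) \tensor y^{(1)}_{j} r = \sum_{j} \alpha(\beta(dz) x^{(-1)}_{j}) \tensor y^{(1)}_{j} r = \zeta\bigl((dz) \tensor r\bigr)$.

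Finally, for the identification with $\cone(\nu)$, I would unpack the totalisation in a fixed total degree~$\ell$: by definition, $\tot(E_{\bullet,\bullet})_{\ell} = \bigoplus_{n \in \bZ} E_{-n, \ell+n}$, and substituting the formula for $E_{\bullet,\bullet}$ and collecting the two summands across all~$n$ gives $(D_{\ell-1} \tensor_{R_{0}} R[t,t^{-1}]) \oplus (D_{\ell} \tensor_{R_{0}} R[t,t^{-1}])$, which is literally the underlying module of $\cone(\nu)_{\ell}$. Under this identification the vertical differential $d_V$ acts within each fixed $R_{n}$-component by the prescribed matrix built from $\pm(d \tensor \id)$ and $\alpha\beta \tensor \id$, while the horizontal differential $d_H$ contributes the $R_{0}$-degree-shifting map (from the $R_{n}$-component to the $R_{n+1}$-component) in the $(2,1)$-slot. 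Combining them in each $n$-component recovers the differential of $\cone(\nu)$, because the explicit formula $\nu(z \tensor r) = \alpha\beta(z) \tensor r - \sum_{j} \alpha(\beta(z) x^{(-1)}_{j}) \tensor y^{(1)}_{j} r$ decomposes, precisely, as its $R_{0}$-degree-preserving part (supplied by $d_V$) plus its degree-shifting part (supplied by $d_H$). Tracking the sign convention used for totalisation of bicomplexes produces the claimed isomorphism.
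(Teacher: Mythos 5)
Your argument is correct and is essentially the paper's own proof: the paper's justification consists of exactly the same degreewise identification $\tot(E_{\bullet,\bullet})_{\ell}\cong\big(D_{\ell-1}\tensor_{R_{0}}R[t,t^{-1}]\big)\oplus\big(D_{\ell}\tensor_{R_{0}}R[t,t^{-1}]\big)$ followed by the remark that ``a straightforward calculation'' yields the three identities, and your block computations (using only $d^{2}=0$, that $\alpha$ and $\beta$ are chain maps, and the $R[t,t^{-1}]$-linearity of the differential of~$C$, which makes $\zeta$ commute with $d\tensor\id$) supply precisely that calculation. The only point to watch --- and the paper is no more explicit about it than you are --- is that the degree-shifting part of $\nu$ is $-\zeta$ while $d_{H}$ contributes $+\zeta$, so the differentials agree only after a sign adjustment such as $(-1)^{n}$ on the $R_{n}$-components; this is harmless, since the lemma only asserts an isomorphism, and your closing sentence about tracking sign conventions already covers it.
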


We wish to analyse the tensor product $\cone(\nu)
\tensor_{R[t,t^{-1}]} R\nov{t}$ using the bicomplex above. For this,
we need to digress a little and talk about truncated powers, or rather
a ``twisted'' version thereof that takes the graded structure of the
ring into account.

\begin{definition}
  Given a right \(R_0\)-module \(M\), we define the {\it twisted left
    truncated power of~$M$}, denoted $\tprodlt M$, by
  \begin{displaymath}
    \tprodlt M =
    \bigoplus_{n<0} \big(M \tensor_{R_0} R_n\big) \,\oplus\,
    \prod_{{n\geq 0}} \big( M \tensor_{R_{0}} R_n\big) \ ,
  \end{displaymath}
  and the {\it twisted right truncated power of~$M$}, denoted
  $\tprodrt M$, by
  \begin{displaymath}
    \tprodrt M = \prod_{n \leq 0} \big( M \tensor_{R_0} R_n \big)
    \,\oplus\, \bigoplus_{n > 0} \big( M\tensor_{R_0} R_n \big) \ .
  \end{displaymath}
\end{definition}

We note that $\tprodlt M$ has a right $R\nov{t}$-module structure; if
we write elements of~$\tprodlt M$ as formal \textsc{Laurent} series
$\sum_{n \geq m} xt^{n}$ with $x_{n} \in M \tensor_{R_{0}} R_{n}$ and
elements of~$R\nov{t}$ as formal \textsc{Laurent} series $\sum_{n \geq
  p} r_{n} t^{n}$ with $r_{n} \in R_{n}$, it is given by the obvious
multiplication of series formula using $x_{k} r_{n} \in M
\tensor_{R_{0}} R_{k+n}$ via the assignment $(m \tensor s_{k}) \cdot
r_{n} = m \tensor (s_{k} r_{n})$. --- Similarly, $ \tprodrt M$
carries a natural right $R\nov{t^{-1}}$-module structure.

\begin{proposition}
  \label{finpres}
  For a finitely presented \(R_0\)-module \(M\), there is an
  isomorphism of right $R\nov{t}$-modules
  \begin{align*}
    \Phi_M \colon M \tensor_{R_{0}} R\nov{t} & \rTo \tprodlt M \ ,
    \quad m \tensor \sum_{k} r_{k} t^{k} \mapsto \sum_{k} (m \tensor
    r_{k}) t^{k}\\%
    \intertext{and an isomorphism of right \(R\nov{t^{-1}}\)-modules}%
    \Psi_M \colon M \tensor_{R_0} R\nov{t^{-1}} & \rTo \tprodrt M \ ,
    \quad m \tensor \sum_{k} r_{k} t^{k} \mapsto \sum_{k} (m \tensor
    r_{k}) t^{k} \ .
  \end{align*}
  Both isomorphisms are natural in~$M$.
\end{proposition}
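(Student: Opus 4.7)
My plan is to establish the statement for $\Phi_M$ (the case of $\Psi_M$ being entirely symmetric) by dévissage, reducing the general finitely presented case first to finitely generated free modules and then to the single module $M = R_0$. First I would verify that $\Phi_M$ is well-defined and $R\nov{t}$-linear. The prescription $(m, \sum_k r_k t^k) \mapsto \sum_k (m \tensor r_k)\, t^k$ is biadditive and $R_0$-balanced, and the resulting series actually lies in $\tprodlt M$ because $r_k = 0$ for $k$ sufficiently negative, so only finitely many summands contribute in the $\bigoplus_{n<0}$ part; $R\nov{t}$-linearity and naturality in~$M$ then amount to matching the convolution formulas in source and target.

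The case $M = R_0$ is immediate: under the canonical identifications $R_0 \tensor_{R_0} R_n \cong R_n$ both sides become $R\nov{t}$ itself, and $\Phi_{R_0}$ becomes the identity. Since the two functors $- \tensor_{R_0} R\nov{t}$ and $\tprodlt(-)$ both commute with finite direct sums (finite sums commute with each of the index-wise $\bigoplus$ and $\prod$ appearing in the definition of $\tprodlt$), naturality yields the case $M = R_0^q$ for every $q \geq 0$.

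For general finitely presented $M$, I would choose a presentation $R_0^p \to R_0^q \to M \to 0$ and form the resulting two-row commutative ladder obtained by applying each functor in turn, with verticals given by $\Phi$. The leftmost two verticals are isomorphisms by the preceding step, so a standard right-exact three-term diagram chase (the cokernel form of the \textsc{Five} Lemma) identifies $\Phi_M$ as an isomorphism, \emph{provided both rows are right-exact}. The top row is right-exact because $- \tensor_{R_0} R\nov{t}$ is. The one genuine subtlety, and hence the main obstacle, is the bottom row: one needs $\tprodlt(-)$ to preserve surjections. This is not automatic in an abstract abelian category, but does hold here because direct products in $\mathrm{Mod}\text{-}R_0$ form an exact functor, so right-exactness of the levelwise tensor product $- \tensor_{R_0} R_n$ is preserved by the index-wise operation $\bigoplus_{n<0} \oplus \prod_{n\geq 0}$; concretely, any element of $\tprodlt M_2$ can be lifted coordinate-by-coordinate to $\tprodlt M_1$, the finite-support condition on the $\bigoplus_{n<0}$ part being inherited from the target.
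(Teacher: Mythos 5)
Your proposal is correct and follows essentially the same route as the paper: establish the isomorphism for finitely generated free modules first, then pass to a finite presentation and use right-exactness of both $-\tensor_{R_0} R\nov{t}$ and $\tprodlt(-)$ (the cokernel form of the five lemma, which the paper delegates to \cite[Lemma~2.1]{DCVNC}). The only cosmetic difference is that you handle the free case via $M=R_0$ and additivity, whereas the paper checks injectivity and surjectivity directly on a basis; your explicit justification that $\tprodlt$ is right exact (exactness of products in module categories, with the finite-support condition inherited under lifting) is exactly the point the paper leaves implicit.
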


\begin{proof}
  We show that $\Phi_{M}$ is bijective, the case of~$\Psi_{M}$ being
  similar. --- Suppose first that $M = F$ is free on the basis $e_{1},
  e_{2}, \cdots, e_{q}$. Then $F \tensor_{R_{0}} R\nov{t}$ is a free
  $R\nov{t}$-module with basis $e_{1} \tensor 1, e_{2} \tensor 1,
  \cdots, e_{q} \tensor 1$. Thus any $x \in F \tensor_{R_{0}}
  R\nov{t}$ can uniquely be written in the form
  \begin{displaymath}
    x = \sum_{j=1}^{q} \Big( e_{j} \tensor \sum_{k} r_{jk} t^{k} \Big)
  \end{displaymath}
  with $r_{jk} \in R_{k}$, and $r_{jk} = 0$ if $k$~is sufficiently
  small. Suppose that $x \in \ker \Phi_{F}$ so that
  \begin{displaymath}
    0 = \Phi_{F} (x) = \sum_{j=1}^{q} \sum_{k} (e_{j} \tensor r_{jk})
    t^{k} = \sum_{k} \sum_{j=1}^{q} (e_{j} \tensor r_{jk}) t^{k}
  \end{displaymath}
  in the twisted left truncated power of~$F$. This implies the
  equality $\sum_{j=1}^{q} e_{j} \tensor r_{jk} = 0 \in F
  \tensor_{R_{0}} R_{k} \subseteq F \tensor_{R_{0}} R[t,t^{-1}]$ for
  all~$k$; as the last module is free on basis elements $e_{j} \tensor
  1$ we conclude that $r_{jk} = 0$ for all~$k$ and~$j$. Consequently
  $x=0$ which proves that $\Phi_{F}$ is injective.

  Now let $z = \sum_{k \geq n} z_{k} t^{k} \in \tprodlt F$ with $z_{k}
  \in F \tensor_{R_{0}} R_{k}$; using that $F$~is free on basis
  elements~$e_{j}$ as before we see that we can write $z_{k}$ in the
  form $z_{k} = \sum_{j} e_{j} \tensor z_{jk}$ with $z_{jk} \in
  R_{k}$. Then
  \begin{displaymath}
    x = \sum_{j} \Big( e_{j} \tensor \sum_{k} z_{jk} t^{k} \Big)
  \end{displaymath}
  is an element of~$F \tensor_{R_{0}} R\nov{t}$ satisfying
  $\Phi_{F}(x) = z$. Thus $\Phi_{F}$ is seen to be surjective.

  For the general case consider a presentation \(G\rTo F\rTo M\rTo 0\)
  of~$M$ by finitely generated free modules~$F$ and~$G$; standard
  homological algebra, using that the functors $X \mapsto X
  \tensor_{R_{0}} R\nov{t}$ and $X \mapsto \tprodlt X$ are right
  exact, shows that $\Phi_{M}$ is bijective,
  cf.~\cite[Lemma~2.1]{DCVNC}.
\end{proof}

The {\it right truncated totalisation} of~$E_{\bullet, \bullet}$,
denoted $\totrt(E_{\bullet, \bullet})$, is the chain complex with
\begin{displaymath}
  \totrt(E_{\bullet, \bullet})_{\ell} = \prod_{n \leq 0} E_{n, \ell-n}
  \oplus \bigoplus_{n \geq 0} E_{n, \ell-n}
\end{displaymath}
and differential $d_{H} + d_{V}$. Plugging in the definition
of~$E_{n,m}$ this can be re-written as
\begin{align*}
  \totrt(E_{\bullet, \bullet})_{\ell} = & \hphantom{\oplus} \ \,
  \prod_{n \leq 0} \Big( \big( D_{\ell-1} \tensor_{R_0} R_{-n} \big)
  \,\oplus\, \big( D_{\ell} \tensor_{R_0} R_{-n} \big) \Big) \\ %
  & \oplus\, \bigoplus_{n > 0} \Big( \big( D_{\ell-1} \tensor_{R_0}
  R_{-n} \big) \,\oplus\, \big( D_{\ell} \tensor_{R_0} R_{-n} \big)
  \Big) \\ %
  = & \tprodlt D_{\ell-1} \,\oplus\, \tprodlt D_{\ell} \ ;
\end{align*}
if the complex~$D$ consists of finitely presented $R_{0}$-modules we
can thus use Proposition~\ref{finpres} to identify $\totrt(E_{\bullet,
  \bullet})_{\ell}$ with the module $\big( D_{\ell-1} \tensor_{R_{0}}
R\nov{t} \big) \oplus \big( D_{\ell} \tensor_{R_{0}} R\nov{t}
\big)$. When combined with the isomorphisms $\cone(\nu)
\tensor_{R[t,t^{-1}]} R\nov{t} \cong \cone(\nu \tensor
\id_{R\nov{t}})$ and
\begin{displaymath}
  D_{\ell} \tensor_{R_{0}} R[t,t^{-1}] \tensor_{R[t,t^{-1}]} R\nov{t}
  \cong D_{\ell} \tensor_{R_{0}} R\nov{t} \ ,
\end{displaymath}
a straightforward calculation with the
differentials $d_{H}$ and~$d_{V}$ yields:

\begin{proposition}
  \label{proposition:cone_Novikov_is_totrt}
  If $D$ consists of finitely presented $R_{0}$-modules, there is an
  isomorphism of $R\nov{t}$-module chain complexes
  $\totrt(E_{\bullet,\bullet}) \cong \cone(\nu) \tensor_{R[t,t^{-1}]}
  R\nov{t}$.\qed
\end{proposition}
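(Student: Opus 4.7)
The plan is to identify both sides levelwise via Proposition~\ref{finpres} and then verify compatibility of the differentials.

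For the levelwise identification, the calculation carried out just before the statement already yields
$$\totrt(E_{\bullet,\bullet})_\ell = \tprodlt D_{\ell-1} \,\oplus\, \tprodlt D_\ell.$$
Since each $D_i$ is finitely presented, Proposition~\ref{finpres} supplies natural $R\nov{t}$-linear isomorphisms $\Phi_{D_i} \colon D_i \tensor_{R_0} R\nov{t} \cong \tprodlt D_i$. Combined with the standard identification $(D_i \tensor_{R_0} R[t,t^{-1}]) \tensor_{R[t,t^{-1}]} R\nov{t} \cong D_i \tensor_{R_0} R\nov{t}$, this produces for each $\ell$ an $R\nov{t}$-linear module isomorphism $\Phi_\ell \colon (\cone(\nu) \tensor_{R[t,t^{-1}]} R\nov{t})_\ell \cong \totrt(E_{\bullet,\bullet})_\ell$.

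The principal task is to verify that $\Phi_\bullet$ is a chain map. I would exploit the fact, already recorded in the text, that the un-truncated totalisation $\tot(E_{\bullet,\bullet})$ equals $\cone(\nu)$ as a chain complex; the question is whether this identification is preserved under (a)~tensoring with $R\nov{t}$ over $R[t,t^{-1}]$ on the cone side and (b)~right truncation on the bicomplex side. The $d_V$ contributions $\pm d \tensor \id_{R_{-n}}$ and $\alpha\beta \tensor \id_{R_{-n}}$ assemble across $n$, by naturality of $\Phi$ in its module argument, into $\pm d \tensor \id_{R\nov{t}}$ and $\alpha\beta \tensor \id_{R\nov{t}}$, reproducing the $d$- and $\alpha\beta$-parts of the cone differential on the target. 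The $d_H$ contributions $\zeta_{-n, n+m}$ shift the $R$-grading by exactly $+1$, so under $\Phi$ they reconstruct the remaining summand $-\sum_j \alpha(\beta(-) x^{(-1)}_j) \tensor y^{(1)}_j(-)$ of $\nu \tensor \id_{R\nov{t}}$.

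The main obstacle is essentially bookkeeping: one must track carefully how the bicomplex column index $n$ corresponds under $\Phi$ to the grading index $k=-n$ in the twisted left truncated power, and confirm that the signs match the convention already fixed by the identification $\tot(E_{\bullet,\bullet}) \cong \cone(\nu)$. No convergence subtlety arises: because $\zeta_{-n, n+m}$ shifts the column index by exactly one, the sum-factor $\bigoplus_{n>0}$ and product-factor $\prod_{n \leq 0}$ of $\totrt(E)_\ell$ map into the corresponding sum- and product-factors of $\totrt(E)_{\ell-1}$ separately, so $\Phi_\bullet$ respects the right truncation cleanly and the isomorphism of chain complexes follows.
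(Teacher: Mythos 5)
Your proposal is correct and follows essentially the same route as the paper: identify $\totrt(E_{\bullet,\bullet})_\ell$ with $\tprodlt D_{\ell-1}\oplus\tprodlt D_\ell$, apply Proposition~\ref{finpres} (using that $D$ is levelwise finitely presented) together with $D_\ell\tensor_{R_0}R[t,t^{-1}]\tensor_{R[t,t^{-1}]}R\nov{t}\cong D_\ell\tensor_{R_0}R\nov{t}$, and then check by direct computation that $d_H+d_V$ matches the differential of $\cone(\nu\tensor\id_{R\nov{t}})$, which is exactly the ``straightforward calculation'' the paper invokes. Only a cosmetic quibble: the component in column $n=1$ is sent by $d_H$ into the product factor at $n=0$ rather than staying in the direct-sum part, but since $d_H$ shifts the column index by exactly one this does not affect well-definedness or your conclusion.
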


\subsection*{From finite domination to trivial Novikov homology}

We are finally in a position to finish the proof of
our main result.

\begin{proof}[Proof of Theorem~\ref{thm:main}, ``only if'' part]
  Suppose that $R[t,t^{-1}]$ is strongly graded. Let $C$ be a bounded
  complex of finitely generated free $R[t,t^{-1}]$-modules; suppose
  that $C$ is $R_{0}$-finitely dominated. Then there is a bounded
  complex~$D$ of finitely generated projective $R_{0}$-modules
  together with a homotopy equivalence $\alpha \colon C \rTo D$ of
  $R_{0}$-module complexes. Let $\beta$~be a homotopy inverse
  of~$\alpha$. According to Corollary~\ref{findomimp} this data can be
  used to manufacture a homotopy equivalence $C \tensor_{R[t,t^{-1}]}
  R\nov{t} \simeq \cone(\nu) \tensor_{R[t,t^{-1}]} R\nov{t}$, where
  $\nu$ is a chain complex self-map of $D \tensor_{R_{0}} R[t,t^{-1}]$
  as in~\eqref{eq:map_nu}. We can use
  Proposition~\ref{proposition:cone_Novikov_is_totrt} to identify
  $\cone(\nu) \tensor_{R[t,t^{-1}]} R\nov{t}$ with $\totrt
  (E_{\bullet, \bullet})$, the right truncated totalisation of the
  double complex defined in~\eqref{eq:E}, as $D$ consists of finitely
  presented $R_{0}$\nobreakdash-modules. The vertical differential of
  this complex, defined in~\eqref{eq:dV}, is the mapping cone of
  $\alpha \beta \tensor \id$. As $\alpha \beta \simeq \id$ this means
  that the columns of~$E_{\bullet, \bullet}$ are acyclic, hence
  $\totrt(E_{\bullet, \bullet})$ is acyclic
  \cite[Proposition~1.2]{DCVNC}. This shows that $C
  \tensor_{R[t,t^{-1}]} R\nov{t}$, being homotopy equivalent to an
  acyclic complex, has trivial homology.

  To prove that $C \tensor_{R[t,t^{-1}]} R\nov{t^{-1}}$ has trivial
  homology too we cannot simply swap the roles of ``left'' and
  ``right'' as we did not analyse whether the {\it rows\/}
  of~$E_{\bullet, \bullet}$ are acyclic. Instead, we can quote what we
  proved so far, applied to the strongly $\bZ$-graded ring $\bar
  R[t,t^{-1}]$ with $n$th homogeneous component~$R_{-n}$ (which as a
  ring, neglecting the grading, coincides with~$R[t,t^{-1}]$). We then
  conclude that $C \tensor_{R[t,t^{-1}]} R\nov{t^{-1}} = C
  \tensor_{\bar R[t,t^{-1}]} \bar R\nov{t}$ has trivial homology as
  required.
\end{proof}

\raggedright

\end{document}